\newtheorem{theorem}{Theorem}[section]
\newtheorem{proposition}{Proposition}[section]
\newtheorem{lemma}{Lemma}[section]
\newtheorem{definition}{Definition}[section]
\newtheorem{example}{Example}[section]
\newtheorem{remark}{Remark}[section]
\newcommand{\bfT}{\mathbf{T}}
\newcommand{\bff}{\mathbf{f}}
\newcommand{\bfy}{\mathbf{y}}
\newcommand{\bfzer}{\mathbf{0}}
\newcommand{\bbA}{\mathbb{A}}
\newcommand{\bbF}{\mathbb{F}}
\newcommand{\bbN}{\mathbb{N}}
\newcommand{\bbR}{\mathbb{R}}
\newcommand{\cF}{\mathcal{F}}
\newcommand{\cP}{\mathcal{P}}
\newcommand{\cR}{\mathcal{R}}
\newcommand{\cS}{\mathcal{S}}
\newcommand{\scrH}{\mathscr{H}}
\newcommand{\al}{\alpha}
\newcommand{\bet}{\beta}
\newcommand{\del}{\delta}
\newcommand{\Del}{\Delta}
\newcommand{\gam}{\gamma}
\newcommand{\eeps}{\epsilon}
\newcommand{\eps}{\varepsilon}
\newcommand{\lam}{\lambda}
\newcommand{\Lam}{\Lambda}
\newcommand{\Om}{\Omega}
\newcommand{\sig}{\sigma}
\newcommand{\txtin}{\quad\text{in}\quad}
\newcommand{\txton}{\quad\text{on}\quad}
\newcommand{\txtor}{\quad\text{or}\quad}
\newcommand{\txtfor}{\quad\text{for}\quad}
\newcommand{\txtand}{\quad\text{and}\quad}
\newcommand{\txtwith}{\quad\text{with}\quad}
\newcommand{\txtforall}{\quad\text{for all}\quad}
\newcommand{\txtforany}{\quad\text{for any}\quad}
\newcommand{\txtae}{\quad\text{for a.e.}\quad}
\newcommand{\cl}{\overline}
\newcommand{\half}{\frac{1}{2}}
\newcommand{\texthalf}{{\textstyle\frac{1}{2}}}
\newcommand{\dom}{\mathscr{D}}
\newcommand{\tl}{\tilde}
\newcommand{\p}{\partial}
\newcommand{\into}{\hookrightarrow}
\newcommand{\dfn}{\mathrel{\mathop:}=}
\newcommand{\intT}{\int_0^T}
\newcommand{\intt}{\int_0^t}
\newcommand{\intOm}{\int_{\Omega}}
\newcommand{\ltom}{{L^2(\Omega)}}
\newcommand{\fmbox}[1]{\fbox{$\displaystyle {#1}$}}
\renewcommand{\implies}{\Rightarrow}
\newcommand{\const}{\text{const}\,}
\renewcommand{\div}{\operatorname{div}}
\newcommand{\curl}{\operatorname{curl}}
\renewcommand{\lg}{\langle}
\newcommand{\rg}{\rangle}
\newcommand{\vecn}[1]{\begin{bmatrix} #1 \end{bmatrix}}
\newcommand{\ds}{\displaystyle}
\def\system{\eqref{wave}--\eqref{ic}}
\def\odesystem{\eqref{ode}--\eqref{ode:ic}}
\newcommand{\V}{H^{1}_0(\Om)}
\newcommand{\lp}{\boldsymbol{\big[\!\!\!\;\big[}}
\newcommand{\rp}{\boldsymbol{\big]\!\!\!\;\big]}}
\begin{document}
\label{pageinit}

\date{}

\title{Stability Analysis of Degenerately-Damped Oscillations}

\author{
Thomas Anderson$^1$, George Avalos$^2$, 
Elizabeth Galvin$^2$,
Ian Kessler$^3$,\\
Michelle Kleckner$^4$,
Daniel Toundykov$^{2, \star}$,
William Tritch$^5$
}

\maketitle

\noindent {$^1$ Department of Computing \& Mathematical Sciences, California Institute of Technology, CA 91125
}\\
\noindent  $^2$ Department of Mathematics, University of Nebraska-Lincoln, NE 68588\\
\noindent  $^3$ Carroll College,  MT 59625\\
\noindent  $^4$ School of Public Health, University of Michigan,  MI 48104\\
\noindent  $^5$ Department of Mathematics \& Statistics, Texas Tech University, TX 79409\\

\noindent $^\star$ \emph{Corresponding author:} dtoundykov@unl.edu

\footnotetext[2010]{\textit{{\bf Mathematics Subject Classification}  Primary: 35L05, 35L05; Secondary: 35B65, 65L60, 93B52, 93D15.}  \\
Keywords: wave, degenerate damping, feedback, nonlinear, stability, smooth solutions, Galerkin, finite elements.}

\markboth{T. Anderson, et al.}{Degenerately-Damped Wave Equation}

\begin{abstract}
Presented here is a study of well-posedness and asymptotic stability of a ``degenerately damped"  PDE  modeling a vibrating elastic string. The coefficient of the damping may vanish at small amplitudes thus weakening the effect of the dissipation. It is shown that the resulting dynamical system has strictly monotonically decreasing energy and uniformly decaying lower-order norms, however,  is \emph{not uniformly stable} on the associated finite-energy space. These theoretical findings were motivated by numerical simulations of this model using a finite element scheme and successive approximations. A description of the numerical approach and sample plots of energy decay are supplied. In addition, for certain initial data the  solution can be determined in closed form up to a dissipative nonlinear ordinary differential equation. Such solutions can be used to assess the accuracy of the numerical examples.
\end{abstract}

\section{Introduction}
Advances in nonlinear functional analysis and the
 rich theory of linear distributed parameter systems have led to a growing of body of work  on nonlinear infinite-dimensional models.
For  instance, in a 2nd-order evolution  framework (especially, wave, elastodynamics, or thin plates with no rotational inertia terms) for an appropriate elliptic operator $A$ a linear equation with viscous damping for an unknown $u=u(t,x)$ may be expressed as
\[
  \ddot{u} + A(x) u + \bet(x)\dot{u} =F(t,x)
\]
with $\bet>0$. We will focus on the evolution on a bounded domain and under suitable homogeneous boundary conditions.  A nonlinear refinement on the dissipative  term may take the form of a  feedback law $g(\dot{u})$. Stability properties of such models have been extensively analyzed.  In an infinite-dimensional sitting such a nonlinear feedback may change the topology of the problem and uniform stability becomes reliant on the regularity of solutions (for example, see \cite{van-mar:00, nicaise:03, las-tou:06}).

A more general scenario would account for coefficients that depend on the solution itself:
\[
  \ddot{u} + A(x) u + \bet(x, u, \dot{u})\dot{u} =F(t,x)\,.
\]
Assuming the well-posedness of an associated initial-boundary value problem can be resolved, if the term $\bet(x,u,\dot{u})$  is not guaranteed to be strictly positive on a \emph{fixed} appropriately configured set, then analysis of stability becomes much more involved  since the  region where the dissipation is active  now evolves with the solution and may not always comply with the requirements of the geometric optics.
 The case when this coefficient vanishes at zero displacement, namely, $\bet(x,0,\dot{u})=0$,  will be referred to as \emph{degenerate} damping.

Such a degeneracy naturally arises when investigating energy decay of higher-order norms. For example, the natural energy space for a semilinear wave problem
\[
 \ddot{u} - \Del u + g(\dot{u}) =0
\]
 is $u \in W^{1,2}(\Om)$ and $\dot{u}\in \ltom$. With more regular initial data one can consider behavior of higher-order energy norms, namely for $(u,u_t)\in W^{2,2}(\Om)\times W^{1,2}(\Om)$. One approach  would be  to differentiate the PDE in time which via the substitution $v= \dot{u}$  leads  to a degenerately damped problem
 \[
  \ddot{v} - \Del v +  g'(v) \dot{v} = 0
 \]
A particular example can be observed in  the relation between  Maxwell's system and the (vectorial) wave equation. For a given medium denote the electric permittivity  by $\eeps$, magnetic permeability by $\lam$  and conductivity by $\sig$. Then Maxwell's system reads
\begin{eqnarray*}
	\dot{E} - \curl(\lam H) + \sig E &=& 0  \\
    \dot{H} + \curl(\eeps E) &=& 0\,,
\end{eqnarray*}
with $\div (\lam H)\equiv 0$. On a bounded domain, subject to the \emph{electric wall} boundary conditions, and for scalar-valued $\lam$, $\eeps$, $\sig$ with  positive lower-bounds, the term  $\sig E$ exponentially stabilizes this system \cite{nic-pig:05:AAA}. In a more accurate nonlinear conduction model the coefficient  $\sig$ may depend on the intensity of the electric field $E$. If we consider, for example, $\sig = \al |E|^p$ for $p\geq 1$, then differentiating the first equation in time and combining with the equation for $H$ gives
\[
  \ddot{E}+ \curl(\lam \curl \eeps E) + \al p |E|^{p-2}E\cdot \dot{E} E + \al |E|^p \dot{E} = 0\,.
\]
For example, taking, $p=1$ gives
\[
  \ddot{E}+ \curl(\lam \curl \eeps E) + \al (E\cdot\dot{E})\hat{E} + \al |E| \dot{E} = 0
\]
where $\hat{E}$ is the normalized vector $E$. The term  $\al |E| \dot{E}$ has features of the viscous dissipation in this second-order equation, but nonlinear conductivity augments it with a degenerate coefficient $\al |E|$.

\medskip
The study of stability for the above models is much more delicate than in the situations where the damping, even nonlinear, depends on the time-derivative only.
Weighted energy methods---from basic energy  laws to Carleman estimates  (e.g. \cite{las-tri:97,tri-yao:02:AMO, las-tri-zha:04:JIIP-I,las-tri-zha:04:JIIP-II,tri-xu:07:AMS, chu-las-tou:08:DCDS, buc-tou:10})---have been successfully used to derive stabilization and observability inequalities for distributed parameter systems. 
However,  these methods typically rely on the properties of the coefficients to ensure that suitable geometric optics conditions are satisfied  and the control effect suitably ``propagated"  \cite{bar-leb-rau:92} across the physical domain.
One can sometimes dispense with geometric optics requirements for  smooth enough initial data \cite{leb-rob:97:DMJ,leb-rob:95:CPDE, bellassoued:05, bor-tom:10:MA, ell-tou:12:EECT}, yet even then the support of the control/damping term must contain a subset that is time-invariant  (and with any time-dependent coefficients being non-vanishing, e.g., as in \cite{bellassoued:05}). In turn,
the analysis of control-effect propagation when the coefficients themselves depend on the solution and possibly go to zero wherever and whenever the solution does would require new techniques.

\subsection{The model}
The following semilinear model, if recast in a higher-dimensional setting becomes highly non-trivial even when just regarding  local wellposedness.  In a one-dimensional framework the nonlinearity is more tractable, but the rigorous stability analysis has long been open. We focus on an elastic string with a \emph{degenerate damping}, namely a dissipative term whose coefficient depends on and may vanish with  the amplitudes:
\begin{equation}\label{wave}
  \ddot{u} - u_{xx}  + f(u)\dot{u} = 0,\txtfor x\in \Om\dfn (0,1),\quad  t>0
\end{equation}
fixed at the end-points
\begin{equation}\label{bc}
 u(t,0) \equiv u(t,1) \equiv 0 \txtfor t \geq 0
\end{equation}
and with a prescribed initial configuration at $t=0$:
\begin{equation}\label{ic}
 u(0,x) =u_0(x),\quad \dot{u}(0,x) = u_1(x) \quad\text{for a.e.}\quad x\in(0,1)\,.
\end{equation}
The initial data $u_0, u_1$ live in the natural function spaces revisited below.  Function $f$ is assumed to be continuous non-negative, hence the term $f(u)\dot{u}$ a priori should provide some form of energy dissipation in the model.

The scenario of interest is when $f(s) \to 0$ as $s\to 0$, essentially causing the  dissipative effect  to deteriorate at small amplitudes. We will focus on the polynomial case
\begin{equation}\label{as:f}
 f(s) \dfn  \al s^{2m},\quad \al>0,\quad  m\in \bbN\,.
\end{equation}
satisfying the locally Lipschitz estimate
\begin{equation}\label{f:factor}
  f(s) - f(r) =  M(s,r)\cdot (s-r) \txtwith  M(s,r)= \al \sum_{j=0}^{2m-1} s^j r^{2m-1-j}\,.
\end{equation}


\subsection{Known results and new challenges}\label{sec:challenges}

Existence and uniqueness of weak \emph{finite-energy} solutions to \system{} was proven in  \cite{ram-stre:02:TAMS} by means of Galerkin approximations. The advantage of a 1D framework is that the displacement function is absolutely continuous, hence topologically $f(u)\dot{u}$ is still in $L^2(0,1)$, as in the case of the corresponding linear model. 
However, in higher-dimensional analogues this embedding property is lost and proving existence becomes a markedly more complex task. First, fractional damping exponents were considered in order  to ensure that the damping term is bounded with respect to the finite energy topology \cite{pit-ram:02:IUMJ}. Arbitrary damping exponents were subsequently  examined in  \cite{bar-las-ram:05,bar-las-ram:07:IUMJ}. Due to the loss of regularity solutions had to be characterized via a variational \emph{inequality} and established by a rather technical application of Kakutani's fixed point theorem.

On the other hand, stability analysis even in one dimension  poses a challenge that has been open for a number of years.
Despite the gain in regularity, attributed to Sobolev embeddings, the key difficulty now is that energy estimates require some sort of information on the region where the damping is supported. In \eqref{wave} both the magnitude and the support of the damping coefficient evolve with the geometry of the state, rendering all standard techniques inapplicable.

It is plausible to assume that some sort of a logarithmic uniform decay rate can be verified, possibly by combining ODE techniques (e.g. \cite{las-tat:93, ram-tou-wil:12:DCDS}) with pointwise Carleman-type estimates. Another, though a rather weak, tentative indication of this outcome would be the uniform stability of the corresponding finite-dimensional analogue (see the Appendix). Yet the situation in infinite dimensions turns out rather different.

\subsection{Contribution of this work}
The goal of this article is to  examine analytically and numerically stability properties of the dynamical system associated to \system{}:
\begin{itemize}
	\item Establish global persistence of regularity in solutions with smooth initial data. Besides  theoretical interest such a result is useful to justify the convergence estimates for numerical approximations.

	\item Prove  that a polynomial degeneracy in the damping of the form \eqref{as:f} yields a system that is \textbf{not uniformly stable}.
	
	\item  Present a numerical scheme that indicates the loss in decay rates. Such  observations had been performed first and, in fact, served as a motivation for the theoretical results presented here.
\end{itemize}

\subsection{Outline}
The notation employed throughout the paper is summarized in Section \ref{sec:notation}. The two main results on well-posedness and stability are stated in Section \ref{sec:main}.

Several auxiliary technical definitions used in the proofs can be found in Section \ref{sec:aux}.
Local and global wellposedness are verified respectively in Sections \ref{sec:local} and \ref{sec:global}. They draw upon two regularity lemmas proved earlier in Section \ref{sec:regularity}.

The proof of the lack of uniform stability is the subject of Section \ref{sec:nonuniform}.  Numerical results are the subject of Section \ref{sec:numerics}.

The Appendix contains results pertaining to the ODE analog of the considered problem, namely, a damped harmonic oscillator with the damping coefficient dependent on the displacement.

\section{Preliminaries}

\subsection{Notation}\label{sec:notation}
This section serves as a quick reference for the basic notation used thorough the paper with some of the symbols revisited and discussed in more detail later in the text.

Henceforth $\|\cdot\|_X$ will denote the norm on a normed space $X$. For the space $\ltom$ we will use
\[
|u|_{_0}\dfn \|u\|_{\ltom},
\]
with the corresponding inner product denoted by $(\cdot, \cdot)_{_0}$.  We will also frequently involve the Sobolev space 
\[
H^1_0(\Om)\dfn  W^{1,2}_0(\Om)
\]
associated to an equivalent inner product and norm
\[
 (u,v)_{_1} = (u_x,v_x)_{_0}\, \quad |u|_{_1} \dfn \sqrt{(u,u)_{_1}}\,.
\]
The bilinear form $\lg \cdot,\, \cdot\rg$ will indicate the  pairing of $\V$ and its continuous dual $H^{-1}(\Om)$.


We will also frequently use spaces of the form
\[
 C^n([0,T];X) \txtor L^p(0,T; X),
\]
which will be abbreviated respectively as
\[
  C^n_T X  \txtand L_T^p X\,.
\]
Looking ahead, for the one-dimensional Dirichlet Laplacian operator $A$ (discussed below) let us introduce the space
\begin{equation}\label{def:STn}
 S_T^n \dfn  \left\{ z   \;\mid\;  z\in C_T^j(\dom(A^{\frac{n+1-j}{2}}))  \txtfor j=0,1,\ldots,n+1 )\right\}\,
\end{equation}
equipped with the natural graph norm.  For example, $S_T^0 = C([0,T]; \dom(A^{1/2}))
\cap C^1([0,T]; \ltom)$ indicates the standard regularity in space and time for a weak solution to a linear wave equation. In turn $S_T^1= C_T\dom(A)\cap C^1_T \dom(A^{1/2})\cap C^2_T \ltom$ does the same for a strong solution to such an equation.

We will also be using spaces
\[
 \scrH^n\dfn \dom(A^{\frac{n+1}{2}})\times\dom(A^\frac{n}{2})\,.
\]
Thus $\scrH^0$ is the natural finite energy state space for a linear wave problem and $\scrH^1$ denotes the domain of the corresponding evolution generator.

Relation $a \lesssim b$ will occasionally be used to indicate that $a\leq C b$ for a constant $C$ which only depends on the main parameters of the system, e.g., the size of the domain $\Om$ or the exponent $p$ of the coefficient in \eqref{as:f}.

\subsection{Laplace operator}
For convenience let us summarize some of the fundamentals. Consider the operator
\begin{equation}\label{def:A}
A = -\p_{xx} : \dom(A)\subset \ltom\to \ltom,\quad \dom(A) = W^{2,2}(\Om)\cap H_0^1(\Om)
\end{equation}
which is positive, self-adjoint with compact resolvent, and has eigenvalues 
\[
\lam_n = n^2 \pi^2,\quad n\in \bbN
\]
with the corresponding eigenfunctions
\[
 E_n(x) = \sqrt{2}\sin(n\pi x),\quad n\in \bbN\,.
\]
The $E_n$'s form an orthonormal basis for $\ltom$.
For $r\in \bbR$  we can define fractional powers of $A$:
\[
 A^r \left(\sum_{n=1}^\infty c_n E_n\right) = 
 \sum_{n=1}^\infty \lam_n^r c_n E_n
\]
with
\[
\dom(A^r) = \left\{ \sum_{n=1}^\infty c_n E_n \quad: \quad   
\sum_{n=1}^\infty   |\lam_n|^{2r} |c_n|^2 <\infty
\right\}\,.
\]
The eigenfunctions $\{E_n\}$ form an  orthogonal basis for every $\dom(A^r)$, $r\in \bbR$. Some of the fractional powers can be identified with Sobolev spaces, e.g.
\[
 \dom(A^{1/2}) = \V \txtand \dom(A^{-1/2}) = [\dom(A^{1/2})]' = H^{-1}(\Om)\,.
\]
Since in our situation the model is one-dimensional, then trivially no issues in these identifications arise in regard to the regularity of the domain. Operator $A$ also corresponds to the Riesz isomorphism $\V \to H^{-1}(\Om)$, and for $u,v \in \V$ we have
\[
 \lg A u, v \rg = (u,v)_{_1},.
\]

\subsection{State space and linear group generator}
The natural finite-energy state space associated with the evolution driven by  \eqref{wave}--\eqref{bc} is
\[
 \scrH^0 \dfn \dom(A^{1/2}) \times\ltom\,.
\]
If we set $y  = (u,\dot{u})$ we can recast this problem as an evolution equation
\[
 y' = \bbA y  
\]

for the skew-adjoint operator $\bbA :\dom(\bbA)\subset \scrH^0\to \scrH^0$
\[
 \bbA = \begin{bmatrix}
 0  &  I\\
 -A  &  0 \\ 
\end{bmatrix}
\]
with 
\[
 \dom(\bbA) = \dom(A) \times \dom(A^{1/2})\,.
\]
We will also consider smoother solutions for which we define
\begin{equation}\label{def:Xn}
\scrH^n \dfn \dom(\bbA^n) = \dom(A^{\frac{1+n}{2}})\times \dom(A^{n/2}) \txtfor n\in \bbN\,
\end{equation}
with the associated graph norm given via
\begin{equation}\label{norm-cHm}
\begin{split}
\| (u_0, u_1)\|_{\scrH^n}^2 =&
\| u_0\|_{\dom(A^{(n+1)/2})}^2 + \| u_1\|_{\dom(A^{n/2})}^2 \\
=&
\sum_{k=1}^\infty \lam_k^n\left(\lam_k |\cF_k[u_0]|^2 + 
 |\cF_k[u_1]|^2\right)\,.
\end{split}
\end{equation}
Here $\cF_k$ denotes the $k$-th Fourier coefficient with respect to the Hilbert basis of $L^2(\Om)$ given by the eigenfunctions $E_k$ of $A$. Note also that in one-dimension the following continuous injection holds
\begin{equation}\label{inclusions}
 \scrH^n \subset  H^{n+1}(\Om) \times H^n(\Om) \into
 C^n(\cl{\Om}) \times C^{n-1}(\cl{\Om})
\end{equation}
for $n\in \bbN\cup\{0\}$ if we adopt the notation convention $C^{-1}(\cl{\Om}) \dfn \ltom$.

\section{Results}

We start with a formalized notion of a solution to the class of PDE systems of the form \system{}. The following slightly more abstract formulation will help streamline the subsequent discussion.

\subsection{Auxiliary definitions}\label{sec:aux}
\begin{definition}[Wave problem]
Let $\lp F, \phi_0, \phi_1\rp$  be the shorthand for the initial-boundary value problem:
\[
  \ddot{\phi} - \phi_{xx}  = F,\txtfor x\in \Om,\; t>0\,,
\]
with the indicated derivatives taken in the sense of distributions, and subject to boundary conditions
\[
  \phi(t,0) \equiv 0 \equiv \phi(t, 1) \txtfor t>0
\]
and initial data
\[
 \phi(0,x) = \phi_0(x), \quad \dot{\phi}(0,x)= \phi_1(x)\txtae x\in \Om\,.
\]

\end{definition}

\begin{definition}[Weak solution of linear problem]\label{def:weak}
Suppose $(u_0,u_1)\in\scrH^0$ and for some $T>0$, $F\in L_T^1\ltom$. Then we say a function
\begin{equation}\label{weak-regularity}
u\in C^1_T \ltom \cap C_T \V \quad \left( \text{equivalently } u\in S_T^0 \text{ or }  (u,\dot{u})\in C_T \scrH^0\right)
\end{equation}
is a weak solution to $\lp F, u_0, u_1\rp$ on interval $[0,T]$  if
\begin{enumerate}[(i)]
\item  $u(0, x) = u_0(x) \txtand \dot{u}(0,x) = u_1(x) \txtae x\in \Om$,
\item For any $\phi\in \ltom$ the scalar map    $t\mapsto (\dot{u}(t), \phi)_{_0}$
is absolutely continuous (hence a.e. differentiable) on $[0,T]$.

\item For any $\phi \in \V$
\begin{equation}\label{weak-d-dt}
 \frac{d}{dt}  ( \dot{u}(t), \phi)_{_0} +( u(t), \phi)_{_1}=
 \big( F(t),  \phi \big)_{_0}  \quad\text{for a.e.}\quad t\in (0,T)\,.
\end{equation}

\end{enumerate}
\end{definition}

\begin{definition}[``Regular" functions]
\label{def:regular}
A function $u$  on $[0,T]\times \Om$ will be described as \textbf {regular of order} $n\in \bbN\cup \{0\}$ if  it is continuously differentiable in time with the following regularity
\begin{equation}\label{def:regular-n}
 (u,\dot{u})\in C_T \scrH^n\,.
\end{equation}
In classical terminology, weak solutions correspond to order $0$ and  strong solutions  to order $1$.

\end{definition}

Suppose  $u=u(t,x)$ has the  weak regularity \eqref{weak-regularity} (regular of order $0$). Then
according to the (1D) Sobolev embedding  $W^{m,2}(\Om) \into C^{m-1}(\cl{\Om}) $ for $ m\in \bbN$,  the function $F(t,x) = f(u(t,x))\dot{u}(t,x)$ is well-defined as an element of $L^2_T\ltom$. In fact we will generalize this statement for the purposes of subsequently analyzing more regular solutions.

\begin{proposition}\label{prop:F-regularity}
Let   $f(s) =\al s^{2m}$ for $m\in \bbN$. If   $z\in S_T^n$, then
\begin{equation}\label{f-z-dot-z-regularity}
 A^{s/2} [ f(z)\dot{z}] \in  C_T^j \dom(A^{\frac{n-s-j}{2}}), \txtfor  s,j\in \bbN\cup\{0\},\quad s+j\leq n\,.
\end{equation}
In addition,
\begin{equation}\label{p_t-f-bound}
\| \p_t^j ( f(z)\dot{z})  \|_{C_T\ltom} \leq \cP\left[  \|\p_t^k z\|_{C _T\V} \right]_{k=1,\ldots, j-1} (1 + \|\p_t^j \dot{z}\|_{C_T\ltom}),\quad j\leq n,
\end{equation}
where $\cP$ is a polynomial in  $j-1$ variables.
\end{proposition}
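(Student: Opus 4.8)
The plan is to reduce the whole statement to a single core claim about time derivatives and then prove that claim by pairing a one-dimensional Sobolev multiplication estimate with a parity argument for the boundary conditions. Since $A^{s/2}$ commutes with $\partial_t$ and sends $\dom(A^r)$ into $\dom(A^{r-s/2})$, the appearance of $s$ and the $C_T^j$ time-regularity in \eqref{f-z-dot-z-regularity} are pure bookkeeping once we establish that, for every $i=0,1,\ldots,n$,
\[
 \partial_t^i\big[f(z)\dot z\big]\in C_T\,\dom(A^{(n-i)/2})\,.
\]
Indeed, applying $A^{s/2}$ lowers the spatial index by $s/2$, and for $i\le j$ the lower-order time derivatives land in the more regular domains $\dom(A^{(n-i)/2})$, which embed into the target $\dom(A^{(n-s-j)/2})$; the hypothesis $s+j\le n$ is exactly what keeps $A^{s/2}$ applicable (as $f(z)\dot z\in C_T\dom(A^{n/2})\subset\dom(A^{s/2})$) and all target indices nonnegative.

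First I would expand the nonlinearity. Writing $f(z)\dot z=\tfrac{\al}{2m+1}\partial_t(z^{2m+1})$, the Leibniz rule gives $\partial_t^i[f(z)\dot z]=\tfrac{\al}{2m+1}\partial_t^{i+1}(z^{2m+1})$ as a finite sum of products $\prod_{\ell=1}^{2m+1}\partial_t^{b_\ell}z$ with $\sum_\ell b_\ell=i+1$. Because $z\in S_T^n$, each factor satisfies $\partial_t^{b_\ell}z\in C_T\dom(A^{(n+1-b_\ell)/2})\subset C_T H^{n+1-b_\ell}(\Om)$. To place each product in $H^{n-i}$, expand $\partial_x^{r}$ for $r\le n-i$ by Leibniz; in each resulting term $\prod_\ell \partial_x^{c_\ell}\partial_t^{b_\ell}z$ the total budget is $\sum_\ell(b_\ell+c_\ell)=(i+1)+r\le n+1$, so at most one factor can have Sobolev index $\le 0$ (two would force the budget past $n+1$), and that factor is still in $\ltom$ while all others sit in $L^\infty$ by the embedding $H^1\into L^\infty$. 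Hölder then puts the term in $\ltom$. Continuity in $t$ follows since each factor is a continuous curve in the relevant domain and the multiplication is a bounded bilinear map.

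The genuinely delicate step is upgrading this interior regularity to membership in the operator domain $\dom(A^{(n-i)/2})$, which additionally requires $A^{p}[\,\cdot\,]\big|_{\partial\Om}=0$, i.e. $\partial_x^{2p}[\,\cdot\,]\big|_{\partial\Om}=0$, over the relevant range of $p$. Here the even exponent in \eqref{as:f} is essential. The boundary being time-independent, every factor inherits the boundary conditions of its domain, so $A^q\partial_t^{b_\ell}z\big|_{\partial\Om}=0$; hence every even-order spatial derivative of each factor vanishes on $\partial\Om$ and only odd-order ones may survive. In $\partial_x^{2p}\big[\prod_{\ell=1}^{2m+1}\partial_t^{b_\ell}z\big]\big|_{\partial\Om}$ a Leibniz term survives only if all $2m+1$ factors carry an odd number of $x$-derivatives; but a sum of an odd number of odd integers is odd, contradicting the even total $2p$. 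Thus every term vanishes. A short count shows the boundary conditions actually required involve at most $n-i-1$ spatial derivatives, so they lie strictly inside the available regularity and no ``top-order'' factor ever obstructs the parity cancellation.

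Finally, the bound \eqref{p_t-f-bound} falls out of the same Leibniz/Hölder accounting: in $\partial_t^j(f(z)\dot z)$ the unique highest factor is $\partial_t^j\dot z$, which is kept in $\ltom$, while every remaining factor is a lower-order time derivative controlled in $L^\infty$ through $\V\into L^\infty$, yielding a polynomial in the norms $\|\partial_t^k z\|_{C_T\V}$ times $\big(1+\|\partial_t^j\dot z\|_{C_T\ltom}\big)$. I expect the boundary-condition verification to be the main obstacle: the interior estimate is routine one-dimensional multiplication, but guaranteeing that the product lands in $\dom(A^{(n-i)/2})$ rather than merely in $H^{n-i}$ is precisely what forces the even exponent and the odd-factor-count parity, and getting the regularity margin right so that the highest required boundary condition stays within reach is the subtle part of the argument.
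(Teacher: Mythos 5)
Your proposal is correct, and although it follows the paper's overall skeleton---reduce everything to showing $\p_t^i[f(z)\dot z]\in C_T\dom(A^{(n-i)/2})$ for $i\le n$, get the $W^{n-i,2}(\Om)$ regularity from a Leibniz expansion in which at most one factor fails to be in $L^\infty(\Om)$ and that factor still lands in $\ltom$, read off \eqref{p_t-f-bound} from affineness in the top-order derivative, and treat $A^{s/2}$ and the $C_T^j$ scale as commuting bookkeeping---it departs from the paper at the decisive step, the boundary conditions, and your version is the correct one. The paper argues that $z\in S_T^n$ forces $\p_x^j\p_t^k z$ to have zero trace for all $j\le n-1$ and all $k$, and from this deduces that \emph{every} mixed derivative of $F=f(z)\dot z$ of total order at most $n-2$ vanishes on $\p\Om$ (its condition \eqref{zero-trace}). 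Both assertions are false: membership in $\dom(A^r)$ controls only \emph{even-order} spatial derivatives at the boundary. For instance $z=\sin(\pi x)\cos(\pi t)$ belongs to every $S_T^n$ yet $\p_x z\neq 0$ at $x=0$; correspondingly $\p_x^3(z^2\dot z)\big|_{\p\Om}=6\,z_x^2\dot z_x\big|_{\p\Om}\neq 0$ in general, so \eqref{zero-trace} itself fails once $n\ge 5$, even though the proposition's conclusion remains true. Your parity argument is exactly the repair: membership in $\dom(A^{(n-i)/2})$ only requires the even-order spatial derivatives of $\p_t^i F$ to vanish; each factor $\p_t^{b_\ell}z$ does have vanishing even-order traces within its regularity range, and your budget count $c_\ell+b_\ell\le n$ keeps every required trace inside that range; and since $z^{2m}\dot z$ is a product of the \emph{odd} number $2m+1$ of factors, no Leibniz term of an even-order derivative $\p_x^{2p}$ can have all factors carrying odd orders, so every term contains a vanishing even-order factor. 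This also makes visible precisely where the even exponent in \eqref{as:f} is used, which the paper's argument obscures. In short, your route not only proves the proposition but fixes a genuine flaw in the paper's own proof of the trace conditions; the rest of your argument (the single-$\ltom$-factor H\"older estimate and the derivation of \eqref{p_t-f-bound}) matches the paper's.
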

\begin{example}\label{example-1}
Due to a variety of spaces and indices involved in the statement of Proposition \ref{prop:F-regularity}, it is helpful to look at a basic example. Take $\al = m=1$, so $f(z)\dot{z}= z^2\dot{z}$ and consider the regularity order $n =4$. Then the condition $z\in S^n_T$ reads
\begin{equation}\label{ex1:z}
z \in  \bigcap_{j=0}^{5}  C_T^{j}(\dom(A^{\frac{5-j}{2}})) 
\end{equation}
In particular, $z\in C_T \dom(A^{5/2})$, which corresponds to $5$ square-integrable derivatives, 
first three of which satisfy zero boundary conditions. We have, for example,
\begin{eqnarray*}
 %
    \p_t^4 [f(z)\dot{z}] &=& 
  30  \dot{z} \ddot{z}^2 
+  20  \dot{z}^2 \dddot{z}  
+   20 z \, \ddot{z} \, \dddot{z}
+   10 z \dot{z} \p_t^4z
      + z^2  \fmbox{\p_t^4 \dot{z}}\,.
\end{eqnarray*}
Thus, for instance, $| \p_t^4 f(z)\dot{z}|_0$ can be estimated using a polynomial of $L^\infty(\Om)$ bounds on the functions $z$, $\dot{z}$, \ldots, $\p_t^4 z$, and one term involving the $\ltom$ norm of the fifth derivative of $z$ in time or, equivalently,  $| \p_t^4 \dot{z}|_0$.  This is precisely the conclusion of  \eqref{p_t-f-bound}.

Likewise, if we consider, say, the $2$-nd derivative in space and $2$-nd in time to $s=2$, $j=2$ in  \eqref{f-z-dot-z-regularity} we get:
\begin{equation}\label{d-t-2}
\p_t^2 [f(z)\dot{z}]
=2\dot{z}^3+6z \dot{z} \ddot{z}+z^2 \dddot{z}
\end{equation}
\begin{equation}\label{d-x-2-d-t-2}
\begin{split}
\p_x^2 \p_t^2 [f(z)\dot{z}]=&
12\dot{z}_x^2\dot{z}+12z_x\ddot{z}\dot{z}_x+12z_x\dot{z}\ddot{z}_x+2z_x^2\dddot{z}+4z\dddot{z}_xz_x+12z\ddot{z}_x\dot{z}_x+6\dot{z}z_{xx}\ddot{z}+6\dot{z}^2\dot{z}_{xx}\\
&+2z\dddot{z}z_{xx}+6z\ddot{z}\dot{z}_{xx}+6z\dot{z}\ddot{z}_{xx}+z^2\fmbox{\dddot{z}_{xx}}
\end{split}
\end{equation}
We have that $\p_t^2 [f(z)\dot{z}]$ has zero trace as follows from \eqref{d-t-2} and the zero boundary condition on $z$.
Moreover, the highest-order  term $\dddot{z}_{xx}$  in $\p_x^2 \p_t^2 [f(z)\dot{z}]$ can  be bounded in $\ltom$ since by \eqref{ex1:z} we have $\dddot{z}\in \dom(A)$. The rest of the terms are in fact in $L^\infty(\Om)$. This confirms that 
$A\p_t^2 [f(z)\dot{z}]\in C_T \ltom$ in agreement with \eqref{f-z-dot-z-regularity}.

Note that if we had, say, $n=6$ then in the same context we would need to prove that $A \p_t^2[f(z)\dot{z}] \in C_T \dom(A)$ instead of just $C_T \ltom$. First of all, $n=6$ would give $z\in C_T \dom(A^{7/2})$ implying that $z, z_x,\ldots, \p_x^5 z$ have zero traces. Hence so do their time-derivatives and then it is immediately follows that  $A \p_t^2[f(z)\dot{z}]$ satisfies zero trace condition. And taking two more derivatives in space in \eqref{d-x-2-d-t-2} yields the highest-order term $\p_x^4 \p_t^3z$ which is in $\ltom$ from  $z\in S_T^6$ that implies $\dddot{z}\in C_T\dom(A^2)$. Thus, $\p_x^4 \p_t^3z\in \ltom$ 
and  whereas other summands in  $\p_x^2 A \p_t^2[f(z)\dot{z}] $ belong to  $L^\infty(\Om)$. We conclude that if $n=6$, then $A \p_t^2[f(z)\dot{z}]$ (is in $\ltom$) has zero trace and integrable second derivative, again in accordance with \eqref{f-z-dot-z-regularity}. 
\end{example}

\begin{proof}
Let 
\[
F(z)\dfn f(z)\dot{z}
\]
and first let's show that $\p_t^j F$ belongs to $C_T \dom(A^{(n-j)/2})$.  For $j\leq n$,  $\p_t^j F$ is a polynomial in the following variables
\[
\p_t^j F =  \cP[f^{(k)}(z),\; \p_t^k z,\; \p_t^k \dot{z}]_{k=0,1,\ldots,j }
\]
that is affine with respect to the highest-order derivative $\p_t^j\dot{z} = \p_t^{j+1} z$, which is at least in $C_T\ltom$ by the assertion that $z\in S_T^n$ (recall \eqref{def:STn} and plug $j=n+1$). We can just bound coefficients of $\p_t^{j+1} z$ using the fact that $\V$ embeds continuously into $C(\cl{\Om})$. Thus the bound \eqref{p_t-f-bound} follows. 

Next, $\dom(A^{\frac{n+1-j}{2}})$ embeds into $C^{n-j}(\cl{\Om})$ for $j \leq n$, so all of the terms in $\p_x^{n-j}\p_t^j F$ are $C_T C(\cl{\Om})$ except for the highest order term  $\p_x^{n-j}\p_t^j \dot{z} \in C_T\ltom$. Since $\p_x^{n-j}\p_t^j F$ is affine with respect to  that term with continuous coefficients, then
\[
 \p_x^{n-j}\p_t^j F \in C_T \ltom\,.
\]
It follows that
\[
\p_t^j F \in C_T W^{n-j,2}(\Om) \txtfor j\leq n\,.
\]

To strengthen this regularity to $C_T \dom(A^{\frac{n-j}{2}})$ we must verify the boundary conditions. Since $\dom(A)$ coincides with the $W^{2,2}(\Om)$ functions that have zero trace, then it is sufficient to show that 
\begin{equation}\label{zero-trace}
\p_x^k \p_j F   = 0  \txton \p\Om\, \txtfor
0\leq k + j\leq n-2\,\quad (k,j \geq0)
\end{equation}
That is, we can show that  every derivative of total order (time $+$ space) up to $n-2$  of $F$ vanishes on the boundary.   The asserted regularity $z\in S^{n}_T$ implies that  $\p_x^j \p_t^k z$ has zero trace for any $j\leq n-1$ and any $k$. Since any  $(n-2)$-order (space and time together) derivative of $F$ involves at most $(n-1)$-st order terms in $z$, then  \eqref{zero-trace} readily follows. Thus
\[
\p_t^j F \in C_T\dom(A^{\frac{n-j}{2}})\,.
\]
Because  $A^{s/2}$ is by definition a bounded operator on $\dom(A^{s/2})$, then for $\frac{s}{2}\leq \frac{n-j}{2}$ we have
\[
  A^{s/2}\p_t^j F = \p_t^j A^{s/2} F \in C_T \dom(A^{\frac{n-j-s}{2}})\,.
\]
confirming \eqref{f-z-dot-z-regularity}.
\end{proof}

\medskip

Relying on (a special case of) Proposition \ref{prop:F-regularity}  we  formulate the notion of solution to \system{}.
\begin{definition}[Weak solution to  \system{}]
We say $u$ is a weak solution to \system{} on $[0,T]$ if  it is a weak solution to $\lp F, u_0, u_1\rp$ with $F(t,x)= f(u(t,x))\dot{u}(t,x)$ (which is in $L_T^2\ltom$ by Proposition \ref{prop:F-regularity} for $n=s=j=0$).
\end{definition}

\begin{definition}[Energy]\label{definition-energy}
For a function $z=z(t,x)$  define the quadratic energy functional of order $n$ by
\begin{equation}\label{def:energy}
  E^{(n)}_z(t)\dfn \half |\p_t^n z(t)|_{_1}^2 + \half |\p_t^n\dot{z}(t)|_{_0}^2 \txtwith E_z \dfn E^{(0)}_z\,.
\end{equation}
\end{definition}

\subsection{Main theorems}\label{sec:main}
With the above definitions in mind, the new results of interest are:
\begin{theorem}[\bf Global well-posedness of weak and regular solutions]\label{thm:well-posed}
Let $f$ be as in \eqref{as:f} with exponent $2m$, $m\in \bbN$. Suppose for some integer $n\geq 0$  the initial condition \eqref{ic} satisfies 
\[
(u_0,u_1) \in \scrH^n \dfn \dom(A^{(n+1)/2}) \times \dom(A^{n/2})\,.
\]
Then there exists a unique weak solution  $u$, regular of order $n$ (Definition \ref{def:regular}), of \system{} on $[0,T]$ for any $T>0$. Moreover  $u\in S_T^n$, that is, $u\in C_T^j \dom(A^{\frac{n+1-j}{2}})$ for $j=0,1,\ldots,n.$
\end{theorem}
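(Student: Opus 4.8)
The plan is to prove global well-posedness and regularity of order $n$ by a standard fixed-point argument at the level of the linear wave problem $\lp F, u_0, u_1\rp$, combined with an a priori energy estimate that upgrades a local solution to a global one. The key observation is that \eqref{wave} is a semilinear perturbation of the free wave equation, with the nonlinearity $F = f(u)\dot u$ living (together with its space-time derivatives) in the correct spaces by Proposition \ref{prop:F-regularity}. I would first treat the base case $n=0$ and then, if not proceeding by induction on $n$, record that the higher-regularity cases follow by the same scheme applied in $S_T^n$ with Proposition \ref{prop:F-regularity} supplying the needed mapping properties.

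First, local existence and uniqueness. I would set up a contraction mapping on a small time interval $[0,T_0]$. Given a candidate $v\in S_{T_0}^n$, define $\Phi(v)$ to be the unique solution of the \emph{linear} problem $\lp -f(v)\dot v,\, u_0,\, u_1\rp$; by the linear semigroup theory for the skew-adjoint generator $\bbA$ (variation of parameters with the group $e^{t\bbA}$ on $\scrH^n$), $\Phi(v)$ is well-defined in $S_{T_0}^n$ precisely because Proposition \ref{prop:F-regularity} guarantees $A^{s/2}\p_t^j[f(v)\dot v]$ has the right regularity with $s+j\le n$. The contraction estimate would exploit the factorization \eqref{f:factor}, $f(s)-f(r)=M(s,r)(s-r)$, to control $\|f(v)\dot v - f(w)\dot w\|$ in $C_{T_0}\ltom$ (and its derivatives) by $\|v-w\|_{S_{T_0}^n}$ times a factor that is $O(T_0)$ after integrating in time; shrinking $T_0$ makes $\Phi$ a contraction. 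The $1$D Sobolev embedding $\dom(A^{(n+1-j)/2})\into C^{n-j}(\cl\Om)$ from \eqref{inclusions} is what keeps all the lower-order factors in $L^\infty$, so only the top derivative carries an $\ltom$ norm.

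Second, the a priori estimate that yields global existence. The crucial structural fact is that the damping is \emph{dissipative}: multiplying \eqref{wave} by $\dot u$ and integrating gives $\frac{d}{dt}E_u(t) = -\intOm f(u)\dot u^2\,dx \le 0$ since $f\ge0$, so the order-$0$ energy is non-increasing and $\|(u,\dot u)\|_{\scrH^0}$ stays bounded for all time. For regularity of order $n\ge1$ I would differentiate the equation $n$ times in time and run the higher-order energy identity for $E_u^{(n)}$; here the damping term $f(u)\dot u$ differentiated $n$ times produces, via the Leibniz rule, one good dissipative term $-\intOm f(u)(\p_t^n\dot u)^2$ plus commutator terms, and these commutators are exactly the quantities controlled by the polynomial bound \eqref{p_t-f-bound}. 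A Grönwall argument on $E_u^{(n)}(t)$, with the lower-order norms already bounded, then gives an a priori bound on any interval $[0,T]$, precluding finite-time blow-up and allowing the local solution to be continued to all of $[0,T]$.

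I expect the \textbf{main obstacle} to be the higher-order a priori estimate, not the local existence. The degeneracy of $f$ at $u=0$ means the damping term provides \emph{no} coercive lower-order control, so one cannot absorb the commutator terms by the dissipation; instead they must be estimated purely as forcing and fed into Grönwall, which is precisely why \eqref{p_t-f-bound} is stated as a polynomial in the lower-order $C_T\V$ norms times an affine factor in the top $\ltom$ norm. The care is in checking that each commutator term genuinely involves at most $\p_t^{n}\dot z$ linearly (affinely), with all other factors of strictly lower order and hence already controlled; the worked computations in Example \ref{example-1} are the template. Once the bound is closed for $E_u^{(n)}$, global existence, uniqueness (again via \eqref{f:factor}), and the claimed membership $u\in S_T^n$ all follow, completing the proof.
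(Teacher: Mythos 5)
Your proposal is correct and follows essentially the same route as the paper: a local contraction built on the unitary group and the factorization \eqref{f:factor} (the paper contracts in $C_T\scrH^n$ and upgrades to $S_T^n$ afterwards via Propositions \ref{prop:F-regularity} and \ref{prop:regular-differentiable}, rather than contracting in $S_T^n$ directly), followed by an inductive higher-order energy identity in which the differentiated damping term is controlled through the polynomial bound \eqref{p_t-f-bound} and Gr\"onwall, yielding a continuous-in-time bound that permits continuation to a global solution (Proposition \ref{prop:extension}). The only cosmetic difference is that you drop the top-order term $\intOm f(u)(\p_t^n\dot u)^2$ by its sign, whereas the paper simply estimates it along with the commutators; both close the argument.
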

\begin{example}\label{ex:square}
If a solution to \system{} with $f(u)=u^2 \dot{u}$ has initial data given by the eigenfunctions of $A$ (every $\dom(A^r)$), then  $u\in C^\infty([0,\infty) ; W^{p,\infty}(\Om))$ for every $p\geq 1$.
\end{example}

\medskip

\begin{theorem}[\bf Lack of uniform stability]\label{thm:unstable}
The dynamical system generated by  \system{} on the state space corresponding to weak solutions $\scrH^0\dfn \V \times \ltom$
is non-accretive, but is  \textbf{not} uniformly stable.  
Specifically, the energy functional $t\mapsto E_u(t)$ is continuous non-increasing; however, for any constants $0<r<\cl{r}$  and any time $\cl{T}>0$ there exists an initial datum $(\cl{u}_0, \cl{u}_1)\in B_{\cl{r}}(0)$ such that the corresponding solution trajectory on $[0,\cl{T}]$ does not intersect $B_{r}(0)$.  

However, for any $s\in(0,1)$ the lower-order norm $\|u(t)\|_{W^{s,2}(\Om)}$ decays to zero as $t\to \infty$ with the decay rate uniform with respect to bounded sets of initial data. In addition, $E_u(t)$ is strictly monotonically decreasing on any interval where $E_u(t)$ is positive.
\end{theorem}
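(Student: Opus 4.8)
\emph{Monotonicity and strict monotonicity.} First I would establish the dissipation identity $\frac{d}{dt}E_u(t) = -\int_0^1 f(u)\dot u^2\,dx = -\alpha\int_0^1 u^{2m}\dot u^2\,dx \leq 0$, proving it for regular ($n\geq1$) solutions by pairing \eqref{wave} with $\dot u$ and integrating by parts, then extending to weak solutions by density of $\scrH^1$ in $\scrH^0$ together with the regularity $u\in C_T\V$, $\dot u\in C_T\ltom$ supplied by Theorem \ref{thm:well-posed}; continuity of $E_u$ follows from the same regularity. For the strict decrease I would prove a rigidity statement: if $E_u$ were constant on some $[t_1,t_2]$, the dissipation would vanish there, so $u^m\dot u=\frac{1}{m+1}\partial_t(u^{m+1})=0$ a.e.; since $u$ is continuous on $[0,T]\times\cl\Om$, this forces $u(t,\cdot)$ to be independent of $t$, hence $\dot u\equiv0$, and then \eqref{wave} collapses to $u_{xx}=0$ under \eqref{bc}, giving $u\equiv0$ and $E_u\equiv0$. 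Thus $E_u$ cannot be constant on any interval where it is positive.

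\emph{Lack of uniform stability.} This is the heart of the result, and I would exploit the degeneracy through high-frequency data. Fix $0<r<\cl r$ and $\cl T>0$, choose $c\in(r,\cl r)$, and take $(\cl u_0,\cl u_1)=(0,cE_n)$, so $\|(\cl u_0,\cl u_1)\|_{\scrH^0}=c<\cl r$. Let $w$ solve the \emph{undamped} linear equation with the same data, explicitly $w=\frac{c}{n\pi}\sin(n\pi t)E_n$, whose energy is conserved, $E_w\equiv\tfrac12 c^2$, while $\|w(t)\|_{L^\infty}\leq \frac{\sqrt2\,c}{n\pi}$ is small. Writing $v=u-w$, the difference solves $\ddot v - v_{xx}=-f(u)\dot u$ with zero data, so the forced-energy estimate gives $\sqrt{E_v(t)}\lesssim\int_0^t |f(u)\dot u|_{_0}\,ds\lesssim \int_0^t\|u\|_{L^\infty}^{2m}\,|\dot u|_{_0}\,ds$. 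Combining the one-dimensional inequality $\|u\|_{L^\infty}\leq\sqrt2\,|u|_{_0}^{1/2}|u|_{_1}^{1/2}$ with $\|v\|_{L^\infty}\lesssim\sqrt{E_v}$, I would run a continuity/bootstrap argument on $[0,\cl T]$: under the hypothesis $\sqrt{E_v}\leq n^{-1/2}$ one gets $\|u\|_{L^\infty}\lesssim n^{-1/2}$, whence $\sqrt{E_v(t)}\lesssim \cl T\,n^{-m}$, which for large $n$ strictly improves the hypothesis (as $m\geq1$) and closes the bootstrap. Therefore $E_v$ stays $O(n^{-2m})$ on $[0,\cl T]$, and since $\|(w,\dot w)\|_{\scrH^0}\equiv c$ the reverse triangle inequality yields $\|(u,\dot u)(t)\|_{\scrH^0}\geq c-\sqrt{2E_v}>r$ for all $t\in[0,\cl T]$ once $n$ is large, i.e. the trajectory never enters $B_r(0)$.

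\emph{Lower-order decay and uniformity.} Because $E_u$ is non-increasing it converges and $\int_0^\infty\!\!\int_0^1 u^{2m}\dot u^2\,dx\,dt<\infty$, so the dissipation over windows $[t_k,t_k+L]$ tends to $0$ along any $t_k\to\infty$. I would show $|u(t)|_{_0}\to0$ by a trajectory-compactness argument: the time-shifts $u_k(\tau)=u(t_k+\tau)$ are bounded in $C([0,L];\V)\cap C^1([0,L];\ltom)$, so by Aubin--Lions a subsequence converges strongly in $C([0,L];\ltom)$, and in $C([0,L];C(\cl\Om))$ by the one-dimensional interpolation, to a limit $\bar u$. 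The vanishing window-dissipation passes to the limit (strong convergence of $u_k^{2m}$ against weak convergence of $\dot u_k$ in $\ltom$), forcing zero dissipation for $\bar u$, whence the rigidity above gives $\bar u\equiv0$ and in particular $u(t_k)\to0$ in $\ltom$. Since every subsequence has a further subsequence with this property, $|u(t)|_{_0}\to0$, and interpolation $\|u\|_{W^{s,2}}\lesssim|u|_{_0}^{1-s}|u|_{_1}^{s}$ with $|u|_{_1}$ bounded yields $\|u(t)\|_{W^{s,2}}\to0$ for each $s\in(0,1)$. Uniformity over a ball $B_R$ I would get by the standard contradiction: if it failed, there would be data in $B_R$ and times $t_k\to\infty$ with $\|u(t_k)\|_{W^{s,2}}\geq\eps$, and the same compactness--rigidity scheme, now also extracting a weak limit of the initial data, would produce a nonzero zero-dissipation limit solution, a contradiction.

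\emph{Main obstacle.} The delicate part is the lower-order decay: making the trajectory-compactness rigorous — passing to the limit in the \emph{nonlinear, degenerate} damping term and upgrading from weak convergence in the energy topology to strong convergence in the lower-order norm so that the rigidity applies — and then promoting pointwise decay to a rate uniform over bounded sets. The instability bootstrap, by contrast, is elementary once the high-frequency datum is chosen so that $\|u\|_{L^\infty}$ is small while the energy norm remains of order one.
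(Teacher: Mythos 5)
Your monotonicity, strict-monotonicity, and instability arguments are correct, and the instability part takes a genuinely different route from the paper. Both proofs compare $u$ with the free linear solution $w$ generated by high-frequency, small-amplitude data and estimate the difference $v=u-w$ through the forced energy inequality; they differ in how the crucial smallness of $\|f(u)\|_{L^\infty}$ is obtained. The paper first proves a global-in-time bound $|u(t)|_{_0}\lesssim |u_0|_{_0}$ by realizing $u=\dot\phi$, where $\phi$ solves the ``primitive'' problem $\ddot\phi-\phi_{xx}+\tl{f}(\dot\phi)=0$ with $\tl{f}$ an antiderivative of $f$ (a classical monotone-damped wave equation), and then interpolates; you instead close a continuity/bootstrap estimate on the finite horizon $[0,\cl{T}]$, using that $u$ stays $L^\infty$-close to $w$, which has amplitude $O(n^{-1})$. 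Since the instability statement only concerns a finite time interval, your bootstrap suffices, is more elementary, and avoids the primitive problem entirely for this part. (One small point: close the bootstrap via the triangle inequality $\|u\|_{L^\infty}\leq\|w\|_{L^\infty}+\|v\|_{L^\infty}\lesssim n^{-1/2}$, not via Gagliardo--Nirenberg applied to $u$ itself, which only yields $n^{-1/4}$ and makes the improvement marginal when $m=1$.)

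The genuine gap is in the \emph{uniform} decay of the lower-order norms. Your compactness--rigidity scheme can plausibly be made rigorous for a single trajectory: the total dissipation is finite, so the windowed dissipation vanishes along any $t_k\to\infty$, Aubin--Lions gives strong $C([0,L];\ltom)$ convergence of the time-shifts, the degenerate nonlinearity passes to the limit (strong convergence of $f(u_k)$ against weak convergence of $\dot u_k$), and rigidity kills the limit; this yields $|u(t)|_{_0}\to 0$ for each fixed solution. But the proposed contradiction argument for uniformity over bounded sets does not close. If $y_{0,k}\in B_R$ and $t_k\to\infty$ with $\|u_k(t_k)\|_{W^{s,2}}\geq\eps$, the windowed dissipation $\int_{t_k-L}^{t_k}\!\int_\Om u_k^{2m}\dot u_k^2$ of a sequence of \emph{different} solutions need not tend to zero: each solution's total dissipation is bounded by $R^2$, but nothing controls where along $[0,\infty)$ it is spent, so the rigidity step has nothing to act on. Extracting a weak limit of the initial data does not repair this, because weak convergence of data gives convergence of the corresponding solutions only on compact time intervals, which cannot reach the escaping times $t_k$. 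This is not a technical wrinkle: soft compactness yields trajectory-wise decay but never a rate uniform on bounded (non-compact) sets, and the first half of this very theorem shows how drastically uniformity can fail absent quantitative input. The paper obtains uniformity quantitatively through the primitive problem: $\phi$ solves a fully, monotonically damped wave equation whose energy decays at the explicit rate $E_\phi(t)\sim t^{-1/m}$ uniformly on bounded sets (Lasiecka--Tataru ODE method), whence $|u(t)|_{_0}^2\leq 2E_\phi(t)$ decays uniformly, and interpolation with the bounded $|u|_{_1}$ gives the uniform $W^{s,2}$ decay for $s\in(0,1)$. Some such quantitative device is what your proof is missing.
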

\begin{remark}[Open problem]
The question of \emph{strong} stability of the dynamical system generated by \system{} on the finite energy space $\scrH^0$  remains open, as far as we are aware. That is, for any given solution can we prove that $\lim_{t\to \infty} E_u(t)=0$?
\end{remark}

\section{Well-posedness}

\subsection{Linear problem}

For convenience let us summarize a few classical results that can be easily verified, for example, using separation of variables.
\begin{lemma}\label{lem:linear}
Consider the problem $\lp F\equiv 0, u_0, u_1\rp$ with $(u_0,u_1)\in \scrH^0$. Then there exist a group $t\mapsto \cS(t)$ of linear operators on  $\scrH^0$  such that $(u(t),\dot{u}(t)) \dfn \cS(t) (u_0,u_1)$  determines a weak solution to this problem on every  $[0,T]$, $T>0$. The group is explicitly given by
\begin{equation}\label{semigroup}
\cS(t)(u_0,u_1) \dfn \sum_{k=1}^\infty \vecn{\ds 
\cos(\sqrt{\lam_k} t) &   \frac{1}{\sqrt{\lam_{k}}}\sin(\sqrt{\lam_k} t)   \\[.2in]
\ds  
-\sqrt{\lam_k} \sin(\sqrt{\lam_k} t) & \cos(\sqrt{\lam_k} t) \\[.05in]
}\vecn{\cF_k[u_0]\\[.2in]\cF_k[u_1]} : \vecn{E_k \\[.2in] E_k}
\end{equation}
where $\{(\lam_k,E_k)\}$ are the eigenvalue-eigenfunction pairs for the operator $A$ defined in \eqref{def:A}, and
 $\cF_k$ is the $k$-th Fourier coefficient with respect to the Hilbert basis $\{E_k\}$ for  $\ltom$:
\[
 E_k(x) = \sqrt{2}\sin(k\pi x)  \txtand \lam_k = k^2\pi^2\,.
\]
Moreover,  $\cS(t)$ is a unitary operator on the space $\dom(\bbA^k)$ with respect to the norm given by \eqref{norm-cHm}, as follows by direct calculation using \eqref{semigroup}. 
\end{lemma}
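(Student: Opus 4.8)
The plan is to build the solution by separation of variables, which decouples the wave equation into a family of independent scalar harmonic oscillators indexed by the Fourier modes, and then to verify \emph{a posteriori} that the resulting series \eqref{semigroup} converges in each $\dom(\bbA^n)$ and solves the problem in the weak sense of Definition \ref{def:weak}. Expanding a candidate solution as $u(t,x)=\sum_{k} c_k(t) E_k(x)$ and projecting $\ddot u - u_{xx}=0$ onto $E_k$ (using $A E_k = \lam_k E_k$) formally yields the decoupled initial value problems $\ddot c_k + \lam_k c_k = 0$ with $c_k(0)=\cF_k[u_0]$, $\dot c_k(0)=\cF_k[u_1]$. Each is an elementary harmonic oscillator whose unique solution is
\[
 c_k(t) = \cF_k[u_0]\cos(\sqrt{\lam_k}\,t) + \tfrac{1}{\sqrt{\lam_k}}\cF_k[u_1]\sin(\sqrt{\lam_k}\,t),
\]
and whose derivative reproduces the second component of the vector in \eqref{semigroup}. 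The $2\times 2$ propagator is $M_k(t)=\exp(t B_k)$ with $B_k=\left[\begin{smallmatrix} 0 & 1\\ -\lam_k & 0\end{smallmatrix}\right]$, the restriction of $\bbA$ to the span of $E_k$; from $B_k^2=-\lam_k\Id$ one recovers $M_k(t)=\cos(\sqrt{\lam_k}\,t)\Id+\tfrac{1}{\sqrt{\lam_k}}\sin(\sqrt{\lam_k}\,t)B_k$. Since $\cS(t)$ acts modewise through $M_k(t)$, the identities $\cS(0)=\Id$ and $\cS(t)\cS(s)=\cS(t+s)$ follow immediately from $\exp(tB_k)\exp(sB_k)=\exp((t+s)B_k)$.

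The unitarity is the heart of the convergence argument. Equip $\bbR^2$ with the weighted inner product $\langle(a,b),(a',b')\rangle_k \dfn \lam_k a a' + b b'$, which is exactly the $k$-th modal contribution to the $\scrH^0$ energy. A one-line check shows $B_k$ is skew-adjoint with respect to $\langle\cdot,\cdot\rangle_k$, so $M_k(t)=\exp(tB_k)$ is an isometry of $(\bbR^2,\langle\cdot,\cdot\rangle_k)$; equivalently $\lam_k|c_k(t)|^2+|\dot c_k(t)|^2 \equiv \lam_k|\cF_k[u_0]|^2+|\cF_k[u_1]|^2$ for all $t$. Multiplying this conserved modal energy by $\lam_k^n$ and summing, in light of the norm formula \eqref{norm-cHm}, shows that the partial sums of \eqref{semigroup} form a Cauchy sequence in $\dom(\bbA^n)$ uniformly in $t$, hence the series converges there for every $t$ and defines the advertised isometry, giving unitarity of $\cS(t)$ on each $\dom(\bbA^n)$. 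Uniform-in-$t$ convergence together with continuity of each finite partial sum yields continuity of $t\mapsto\cS(t)(u_0,u_1)$ into $\dom(\bbA^n)$, which for $n=0$ is precisely the weak regularity \eqref{weak-regularity}.

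The step requiring the most care is confirming that \eqref{semigroup} is a genuine weak solution, i.e.\ that it meets conditions (i)--(iii) of Definition \ref{def:weak} rather than merely the formal modal relations. Fixing $\phi\in\V$, I would write $(\dot u(t),\phi)_{_0}=\sum_k \dot c_k(t)\cF_k[\phi]$ and $(u(t),\phi)_{_1}=\sum_k \lam_k c_k(t)\cF_k[\phi]$, and then verify \eqref{weak-d-dt} by checking the modal identity $\frac{d}{dt}\big(\dot c_k(t)\cF_k[\phi]\big)+\lam_k c_k(t)\cF_k[\phi]=0$, which is just the ODE. The admissibility of differentiating termwise, and the absolute continuity of $t\mapsto(\dot u(t),\phi)_{_0}$, follow from the conserved modal energy of the previous paragraph combined with $\sum_k\lam_k|\cF_k[\phi]|^2<\infty$ (since $\phi\in\V$): these bounds make the differentiated series converge uniformly on $[0,T]$ via Cauchy--Schwarz. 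The initial conditions (i) are obtained by evaluating the convergent series at $t=0$. In short, once the conserved modal energy is in hand, every remaining assertion reduces to interchanging summation with differentiation, which that conservation legitimizes.
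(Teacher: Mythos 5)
Your proposal follows exactly the route the paper has in mind: the paper offers no written proof of Lemma \ref{lem:linear} beyond the remark that it can be ``easily verified, for example, using separation of variables,'' and your modal decomposition, the identification of the propagator $\exp(tB_k)$ via $B_k^2=-\lam_k\Id$, the conserved modal energy, and the resulting convergence, group-property, and unitarity arguments are a correct and careful execution of that program for the series formula \eqref{semigroup}, the unitarity on each $\dom(\bbA^n)$, the continuity in time, and conditions (i) and (iii) of Definition \ref{def:weak}.

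There is, however, one genuine discrepancy between what you prove and what the lemma literally asserts. Condition (ii) of Definition \ref{def:weak} demands absolute continuity of $t\mapsto(\dot u(t),\phi)_{_0}$ for \emph{every} $\phi\in\ltom$, whereas your argument fixes $\phi\in\V$ from the outset: your Cauchy--Schwarz bound on the differentiated series $\sum_k\lam_k c_k(t)\cF_k[\phi]$ splits $\lam_k=\sqrt{\lam_k}\cdot\sqrt{\lam_k}$ and places one factor on $\phi$, so it requires $\sum_k\lam_k|\cF_k[\phi]|^2<\infty$, i.e.\ $\phi\in\V$. For $\phi$ merely in $\ltom$ this estimate collapses, and in fact no argument can close the gap: take $u_0=\sum_{n\geq 1} 2^{-3n/2}E_{2^n}\in\V$, $u_1=0$, and $\phi=\sum_{n\geq 1} 2^{-n/2}E_{2^n}\in\ltom\setminus\V$; then
\[
(\dot u(t),\phi)_{_0}=-\pi\sum_{n\geq 1} 2^{-n}\sin(2^n\pi t)\,,
\]
a lacunary Weierstrass--Hardy series with amplitude ratio $a=1/2$ and frequency ratio $b=2$, so $ab=1$ and the function is nowhere differentiable, hence not absolutely continuous. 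Thus condition (ii) as printed is not satisfied by the wave group for all finite-energy data; the defect lies in the paper's Definition \ref{def:weak} (whose subsequent use, e.g.\ in the variational form, only ever tests against $\phi\in\V$), and your silent restriction to $\phi\in\V$ is the mathematically correct reading. You should state that restriction explicitly rather than claim conditions (i)--(iii) in their literal form; with that caveat your proof is complete.
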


The next result is likewise well-known:
\begin{proposition}[Inhomogeneous linear problem]\label{prop:linear-problem}
Consider the problem $\lp F, w_0,  w_1 \rp$ with
\begin{equation}\label{wave:lin:ic}
(w_0,  w_1) \in \scrH^0 \,.
\end{equation}
If $F\in L^2_T \ltom$, this problem possesses a unique weak solution   $w\in S_T^0$. Then the continuous mapping $t\mapsto E_w(t)$ for energy functional  \eqref{def:energy}  satisfies
\begin{equation}\label{energy-identity:0}
 E_w(t) + \intt (F(s), \dot{w}(s))_{_0} \,ds = E_w(0) \txtforall t\in [0,T]\,.
\end{equation}
In particular, from the Gronwall estimate it readily follows
\[
 \|(w(t), \dot{w}(t))\|_{\scrH^0}^2 \leq   C\left(\|(w_0, w_1)\|_{\scrH^0}^2 + \|F\|^2_{L^2_T\ltom}\right) e^t  \txtforall t\in [0,T]\,.
\]
Moreover, if  $(w_0,w_1)\in \scrH^1$ and $F\in C^1_T\ltom$, then $w \in C^2_T\ltom$ and $w_{xx}\in C_T\ltom$  (e.g. see \cite[Thm. 2.1, p. 229]{b:barbu93:rom}\footnote{There's a minor misprint in \cite[Thm. 2.1, p. 229]{b:barbu93:rom}: the first assumption is meant to read $u_0\in H_0^1(\Om)$ (instead of ``$H^2_0(\Om)$"). In the second half of that theorem, which is the one we cite, it is strengthened to $u_0\in H_0^1(\Om)\cap H^2(\Om)$ for strong solutions.}) 

\end{proposition}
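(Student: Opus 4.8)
The plan is to construct the solution explicitly from the unitary group $\cS(t)$ of Lemma \ref{lem:linear} and then read off the energy balance and the higher regularity from the scalar Fourier modes. For existence and uniqueness I would write the variation-of-parameters (Duhamel) formula
\[
 (w(t),\dot w(t)) \dfn \cS(t)(w_0,w_1) + \intt \cS(t-s)\,(0,F(s))\,ds\,.
\]
Because $\cS$ is unitary on $\scrH^0$ and $F\in L^2_T\ltom$, the integrand is Bochner integrable and $t\mapsto (w(t),\dot w(t))$ is continuous into $\scrH^0$, giving $w\in S_T^0$; that this $w$ meets Definition \ref{def:weak} is checked by pairing the formula with a test function $\phi\in\V$, and uniqueness follows since the difference of two solutions solves $\lp 0,0,0\rp$, which $\cS$ forces to vanish. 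Equivalently, and most transparently, one can project onto $\{E_k\}$: the PDE decouples into the scalar ODEs $\ddot{\hat w}_k + \lam_k\hat w_k = \cF_k[F]$ with data $\cF_k[w_0],\cF_k[w_1]$, each solved uniquely by elementary variation of parameters, and the $\scrH^0$ bounds are exactly the summability of $\sum_k(\lam_k|\hat w_k(t)|^2+|\dot{\hat w}_k(t)|^2)$.

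Next I would derive the energy identity \eqref{energy-identity:0} and the Gronwall bound. For smooth data the computation is immediate: testing the equation with $\dot w$ and integrating by parts in $x$ (using the zero boundary trace of $\dot w$) makes the stiffness term $(w,\dot w)_{_1}$ cancel $(\dot w,w_{xx})_{_0}$, leaving $\frac{d}{dt}E_w=(F,\dot w)_{_0}$, which integrates to \eqref{energy-identity:0}. The genuinely delicate point --- the main obstacle here --- is that for a \emph{merely weak} solution $\dot w(t)\in\ltom$ is not an admissible test function in \eqref{weak-d-dt}, which only permits $\phi\in\V$, so the formal multiplication by $\dot w$ is not licensed; I would resolve this either mode-by-mode, where each scalar ODE satisfies $\half(\lam_k|\hat w_k|^2+|\dot{\hat w}_k|^2)'=\cF_k[F]\,\dot{\hat w}_k$ and the summation and integration are justified by the uniform $L^2_T$ bounds, or by density, approximating $(w_0,w_1)$ in $\scrH^0$ and $F$ in $L^2_T\ltom$ by smooth data and passing to the limit via the continuous dependence built into the unitarity of $\cS$. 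The Gronwall bound then drops out: since $|(F,\dot w)_{_0}|\leq\half|F|_{_0}^2+E_w$, the identity gives $E_w(t)\leq E_w(0)+\half\|F\|_{L^2_T\ltom}^2+\intt E_w(s)\,ds$, and Gronwall's lemma yields the stated estimate after recalling $\|(w,\dot w)\|_{\scrH^0}^2=2E_w$.

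Finally, for the higher regularity I would differentiate in time rather than merely quote \cite{b:barbu93:rom}. Setting $v=\dot w$, formally $v$ solves $\lp \dot F, w_1, (w_0)_{xx}+F(0)\rp$, and the data are admissible: $(w_0,w_1)\in\scrH^1$ gives $w_1\in\dom(A^{1/2})$ and $(w_0)_{xx}=-Aw_0\in\ltom$, while $F\in C^1_T\ltom$ gives $F(0)\in\ltom$ and $\dot F\in L^2_T\ltom$. Applying the first part to $v$ yields $\dot w\in S_T^0$, hence $w\in C^2_T\ltom$ and $\dot w\in C_T\V$; reading $w_{xx}=\ddot w - F\in C_T\ltom$ off the equation, together with the already-known $w(t)\in\V$, then upgrades $w$ to $C_T\dom(A)$. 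The one thing to justify carefully in this bootstrap is that $\dot w$ genuinely solves the differentiated problem with initial velocity $\ddot w(0)=(w_0)_{xx}+F(0)$; I would make this rigorous by differentiating the Duhamel/Fourier representation in time, where that initial velocity appears explicitly.
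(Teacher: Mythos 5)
Your proposal is correct, but it necessarily takes a different route from the paper, because the paper offers no proof at all: Proposition \ref{prop:linear-problem} is introduced as ``likewise well-known,'' with the weak theory implicit in the separation-of-variables group of Lemma \ref{lem:linear} and the strong-regularity statement delegated wholesale to the cited theorem in Barbu's book. What you do differently is supply a self-contained argument: Duhamel's formula driven by the unitary group for existence, Fourier-mode projection (or density) for the energy identity of merely weak solutions, and a time-differentiation bootstrap of the first part in place of the citation for the $(w_0,w_1)\in\scrH^1$ case. Your two flagged delicate points are exactly the right ones --- $\dot w$ is not an admissible test function in \eqref{weak-d-dt}, and $\dot w$ must be shown to genuinely solve the differentiated problem --- and your fixes work: the mode-by-mode balance sums correctly under the uniform bounds, and differentiating the Duhamel formula is legitimate precisely because $(w_0,w_1)\in\dom(\bbA)$ and $F\in C^1_T\ltom$. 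One small precision: for uniqueness, the Fourier projection is the version that actually closes the argument; saying the difference ``solves $\lp 0,0,0\rp$, which $\cS$ forces to vanish'' presupposes that every \emph{weak} solution lies on a group orbit, which is exactly what projecting \eqref{weak-d-dt} onto each $E_k$ and invoking scalar ODE uniqueness establishes. What the paper's approach buys is brevity; what yours buys is a proof checkable entirely within the paper's own framework, with an explicit constant in the Gronwall bound.

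One caveat concerning the sign in \eqref{energy-identity:0}. Under the paper's literal convention $\ddot w - w_{xx}=F$ for $\lp F,w_0,w_1\rp$, your computation $\frac{d}{dt}E_w=(F,\dot w)_{_0}$ is right, but it integrates to $E_w(t)-\int_0^t(F,\dot w)_{_0}\,ds=E_w(0)$, the \emph{opposite} sign from \eqref{energy-identity:0} as stated. The stated identity corresponds to the convention $\ddot w - w_{xx}+F=0$, which is the one the paper actually uses downstream (it takes $F=f(u)\dot u$ and needs $E_u$ non-increasing; compare \eqref{variational} and \eqref{bound-energy0}). This is an inconsistency internal to the paper rather than an error of yours, but in a final write-up you must fix one convention and use it in \emph{both} the Duhamel formula and the energy identity; otherwise the damping would appear anti-dissipative when the proposition is applied to the nonlinear problem.
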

\qed


\subsection{Variational formulation}

\begin{proposition}[Variational form]
Suppose $u$ is a weak solution of \system{} on $[0,T]$. Then for any test-function $v\in C^1_t\ltom\cap C_t\V$ with $t\in [0,T]$, it satisfies the variational identity
\begin{equation}\label{variational}
 ( \dot{u}, v)_{_0} \bigg|_{_0}^t - \intt ( \dot{u}, \dot{v})_{_0} + \intt ( u, v)_{_1}  + \intt ( f(u)\dot{u}, v )_{_0} = 0
\end{equation}
\end{proposition}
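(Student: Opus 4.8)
The plan is to derive \eqref{variational} from the defining weak formulation \eqref{weak-d-dt} and to pass from fixed test vectors $\phi\in\V$ to the time-dependent test function $v$ by a telescoping integration-by-parts argument in time. First I would fix $\phi\in\V$ and use part (ii) of Definition \ref{def:weak} to treat $t\mapsto(\dot u(t),\phi)_{_0}$ as absolutely continuous, so that integrating \eqref{weak-d-dt}, with $F$ given by the damping term of \system{}, over any subinterval $[t',t'']\subset[0,t]$ via the fundamental theorem of calculus yields
\[
 (\dot u(t''),\phi)_{_0}-(\dot u(t'),\phi)_{_0}+\int_{t'}^{t''}(u(s),\phi)_{_1}\,ds+\int_{t'}^{t''}(f(u(s))\dot u(s),\phi)_{_0}\,ds=0 .
\]
This is precisely \eqref{variational} in the special case of a time-\emph{independent} test function (for which $\dot v\equiv 0$). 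Each term is well defined by the weak regularity \eqref{weak-regularity} together with $f(u)\dot u\in L^2_T\ltom$, which holds by Proposition \ref{prop:F-regularity} with $n=s=j=0$.

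To reach a genuinely time-dependent $v\in C^1_T\ltom\cap C_T\V$ I would partition $[0,t]$ by a mesh $0=t_0<\dots<t_N=t$, telescope
\[
 (\dot u(t),v(t))_{_0}-(\dot u(0),v(0))_{_0}=\sum_{j=0}^{N-1}\Big[(\dot u(t_{j+1}),v(t_{j+1}))_{_0}-(\dot u(t_j),v(t_j))_{_0}\Big],
\]
and split each increment as $(\dot u(t_{j+1})-\dot u(t_j),\,v(t_{j+1}))_{_0}+(\dot u(t_j),\,v(t_{j+1})-v(t_j))_{_0}$. The first piece is computed by the integrated identity above with the \emph{frozen} test vector $\phi=v(t_{j+1})\in\V$; the second is rewritten through $v(t_{j+1})-v(t_j)=\int_{t_j}^{t_{j+1}}\dot v(s)\,ds$, using $v\in C^1_T\ltom$. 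Summing over $j$ and refining the mesh, the first pieces converge to $-\intt(u,v)_{_1}-\intt(f(u)\dot u,v)_{_0}$ and the second to $\intt(\dot u,\dot v)_{_0}$; rearranging the resulting limit is exactly \eqref{variational}.

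Justifying the convergence of these Riemann-type sums is the step I expect to be the main obstacle, because $\dot u$ is only \emph{continuous} as an $\ltom$-valued map and is not differentiable, so the classical product rule cannot be applied to $(\dot u(t),v(t))_{_0}$ directly. The telescoping device circumvents this by letting $v$ carry all of the time-differentiation, while $\dot u$ enters only through the already-integrated identity for frozen test vectors and through its uniform continuity. Quantitatively, replacing $v(t_{j+1})$ by $v(s)$ and $\dot u(t_j)$ by $\dot u(s)$ inside the sums produces errors controlled by the moduli of continuity of $v$ in $C_T\V$, of $\dot v$ and $\dot u$ in $C_T\ltom$, and of $u$ in $C_T\V$, each of which tends to zero as the mesh is refined. (Equivalently one may mollify $\dot u$ in time, apply the smooth product rule, and then remove the regularization; the bookkeeping is identical.) Once this limit is in place, \eqref{variational} follows and the proof is complete.
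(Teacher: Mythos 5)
Your argument is correct, but it takes a genuinely different route from the paper's. The paper also starts from \eqref{weak-d-dt}; it first establishes the identity for \emph{separable} test functions $c(t)\phi$ with $c\in C^1([0,T])$ and $\phi\in\V$ (its \eqref{variational-truncated}, obtained from the product rule for the absolutely continuous scalar map $t\mapsto(\dot u(t),\phi)_{_0}$ multiplied by the $C^1$ factor $c$), and then expands a general $v$ in the eigenfunction basis $\{E_k\}$ of $A$, applies the separable identity to the partial sums $\sum^M c_k E_k$, and passes to the limit $M\to\infty$ using convergence of the series in $C_t\V$ and of its time derivative in $C_t\ltom$. You avoid any basis expansion: you integrate \eqref{weak-d-dt} with frozen $\phi\in\V$, then restore the time dependence of $v$ by telescoping with a discrete product rule (summation by parts), and you control the Riemann-sum errors via uniform continuity of $u$ in $C_t\V$, of $\dot u$ and $\dot v$ in $C_t\ltom$, of $v$ in $C_t\V$, together with $f(u)\dot u\in L^2_T\ltom$ from Proposition \ref{prop:F-regularity} --- all of which is legitimate on the compact interval $[0,t]$, so the limit closes exactly as you claim. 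Both proofs face the same obstruction --- $\dot u$ is merely continuous as an $\ltom$-valued map, so $(\dot u(t),v(t))_{_0}$ admits no classical product rule --- and both resolve it by shifting the time differentiation onto $v$ (in the paper, onto the coefficients $c_k$). What your route buys is independence of the spectral structure: it uses nothing about the eigenbasis of $A$ and would transfer verbatim to problems where no convenient orthogonal basis is available. What the paper's route buys is a shorter limit argument: once the separable case is in hand, only the uniform-in-time convergence of the eigenfunction expansion is needed, and the mesh-refinement bookkeeping disappears.
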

\begin{proof}
Let $c \in C^1([0,T])$, then for any $\phi \in \V$ we have from \eqref{weak-d-dt}
\begin{equation}\label{variational-truncated}
 ( \dot{u}(t), c(t) \phi)_{_0} \bigg|_{_0}^t  -\intt  ( \dot{u}(t), c'(t)\phi )_{_0}  dt +  \intt (u (t), c(t)\phi)_{_1}  dt + 
\intt \left( f(u(t)) \dot{u}(t), c(t) \phi \right)_{_0}   dt= 0\,.
\end{equation}
Let $\{E_n\}$ be the orthonormal basis for $\ltom$  consisting of the eigenfunctions of $A$.  
Given $v\in C^1_t \ltom\cap C_t \V$ we can represent it as
\[
v(s,x) = \sum_{k=1}^\infty c_k(s)e_k(x)
\]
for $c_k\in C^1([0,T])$, $k\in \bbN$. This series
that converges to $v$ in $C_t \V$  and  its time-derivative $\sum_{k=1}^\infty c_k'(s)e_k(x)$ converges to $v'$ in $C_t\ltom$.

By applying identity \eqref{variational-truncated} to finite sums $\sum^M c_k e_k$ and passing to the limit $M\to \infty$ we recover \eqref{variational}. 
\end{proof}

\subsection{Relation between regularity and higher-order energy}\label{sec:regularity}

The existence of finite energy solutions to \system{} is known. Well-posedness for \emph{regular solutions}, however, requires more work and relies on the connection between the smoothness in space and smoothness in time as  summarized by the diagram below:
\[
  E^{(n)}_u(t)\dfn \half |\p_t^n u(t)|_{_1}^2 + \half |\p_t^n\dot{u}(t)|_{_0}^2\,.
\]
\begin{center}
\begin{tabular}{ccc}
local solutions in $C_T \scrH^n$  &   $\longrightarrow$  &  $E_u^{(n)}(t)$  is well defined\\
$\uparrow$ && $\downarrow$\\
bound on the $\scrH^n$ norm & $\longleftarrow$ & Energy identity and a bound on $E_u^{(n)}(t)$ 
\end{tabular}
\end{center}

The  purpose of this subsection is to furnish this connection which can be loosely outlined as follows: a weak solution $u$ of \system{} is regular of order $n$ on $[0,T]$ if and only if the $n$-th order energy is bounded on $[0,T]$. We split this claim into two propositions.

In order not to keep track of how the structure of $f(u)\dot{u}$ changes after differentiation we introduce the following, somewhat abstract property:
\begin{definition}[Regularity dependence]\label{def:reg-dependence}
We say two functions $(z,F) $  have \textbf{order $n$ dependence} if for every $k=1,2,\ldots n$ the regularity (\textbf{if} it holds)
\[
  z \in C_T^j \dom(A^{\frac{n+1-j}{2}}) \txtfor j\leq k 
\]
\textbf{would imply} that
\[
F \in C_T^j\dom(A^{\frac{n-j}{2}}) \txtfor j\leq k
 \]
where $j$ is a  non-negative integer. 
It is helpful to note:
\begin{itemize}

	\item if $(z,F)$ have order $n$ dependence, they trivially have order $\ell$ dependence for any $\ell=0,1,2,\ldots,n$.

	\item  if $(z,F)$ have order $n$ dependence,  then  for $s\in \bbN$, $s\leq n$ the functions $A^{s/2}z$ and $A^{s/2}F$ have order $(n-s)$ dependence.

\end{itemize}
\end{definition}

Again, it is helpful to consider an example.
\begin{example}
Let $F = f(z)\dot{z}$. Then it is not hard to check that $(z,F)$ have order $n$ dependence. For instance take $n=4$ and assume $z\in C_T^j \dom(A^{\frac{5-j}{2}})$.    As was shown via \eqref{d-t-2} and \eqref{d-x-2-d-t-2} in Example \ref{example-1}
\[
 \p_t^2 F  \in \dom(A) 
\]
continuously in time. In particular, $F\in C^2_T \dom(A^\frac{4-2}{2})$.
\end{example}

\begin{proposition}\label{prop:regular-differentiable}Suppose  that $u$ is a weak solution on $[0,T]$ to $\lp F, u_0, u_1\rp$ with $F\in C_T\ltom$ and $\|(u,\dot{u})\|_{C_T\scrH^0} \leq R_1$. Assume functions 
$(u, F)$ have order $n$ dependence for some $n\in \bbN\cup\{0\}$.

Suppose, in addition,
\begin{equation}\label{En-regularity}
(\p_t^n u, \p_t^n \dot{u})\in C_T \scrH^0
\end{equation}
 with $\|(\p_t^n u,\p_{t}^{n}\dot{u})\|_{C_T\scrH^0} \leq R_2 $, then
\begin{equation}\label{u:regularity}
u\in S_T^n, 
 \end{equation}
in other words, $u\in C_T^j \dom(A^\frac{n+1-j}{2}) \txtfor j\leq n+1$. Moreover,
 \begin{equation}\label{regular-bound}
  \|(u,\dot{u})\|_{S_T^n} \leq  K(n, R_1,R_2),
 \end{equation}
 for a constant $K(n, R_1,R_2)$ dependent only on $n$, $R_1$, $R_2$ and  being continuous monotone increasing with respect to the parameters $R_1$, $R_2$.
 \end{proposition}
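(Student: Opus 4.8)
The plan is to convert the given temporal regularity of the top derivative into full space–time regularity by trading each power of $A$ for two time derivatives through the equation, written in the differentiated form
\[
 A\,\p_t^j u = \p_t^j F - \p_t^{j+2} u, \qquad F = f(u)\dot u .
\]
The obstacle is that this relation is genuinely \emph{coupled}: to raise the spatial regularity of $\p_t^j u$ one needs $\p_t^j F$ to be correspondingly regular, but $\p_t^j F$ is a polynomial in all of $u,\dot u,\ldots,\p_t^{j+1}u$, so its spatial regularity requires that of the \emph{low-order} time derivatives, which are exactly the ones carrying the highest target regularity. A one-index induction on $j$ cannot close (the term $\p_t^{j+2}u$ wants to be treated from high $j$ downward, while $\p_t^j F$ wants the low-$j$ data first). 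I would resolve this by inducting on the \textbf{spatial} order instead, upgrading \emph{all} admissible time derivatives by one space derivative per round. Concretely I set, for $p=0,1,\ldots,n+1$,
\[
 P_p:\qquad \p_t^j u \in C_T\dom(A^{p/2}) \txtforall j\le n+1-p,
\]
so that $\bigcup_p P_p$ is exactly $u\in S_T^n$, and I track the $C_T\dom(A^{p/2})$ norms to produce the final bound $K(n,R_1,R_2)$.

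The base cases are $P_0$ and $P_1$. Here $P_0$ is just the existence and continuity of the $\ltom$-valued derivatives $\p_t^j u$, $j\le n+1$, which is built into the hypotheses. The step $P_1$ is the one genuinely new device and cannot come from the equation (there is no spatial level below $L^2$ to feed it); it must be produced by \emph{temporal interpolation} between the two endpoints supplied by the hypotheses, namely $u\in C_T\dom(A^{1/2})$ (from the bound $R_1$) and $\p_t^n u\in C_T\dom(A^{1/2})$ (from $R_2$). I would obtain it by an elementary reconstruction argument: writing $h(t)$ for the $n$-fold time integral of $\p_t^n u$ (which is $\dom(A^{1/2})$-valued and continuous), the difference $g=u-h$ has vanishing $n$-th derivative, hence is a polynomial $\sum_{i<n} a_i t^i$ with $\ltom$ coefficients; since $g=u-h\in C_T\dom(A^{1/2})$, evaluating at $n$ distinct times and inverting the Vandermonde system forces every $a_i\in\dom(A^{1/2})$. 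Thus $\p_t^j u=g^{(j)}+h^{(j)}\in C_T\dom(A^{1/2})$ for all $j\le n$, with a bound $C(n,T)(R_1+R_2)$. In one dimension this already places every $\p_t^j u$, $j\le n$, in $C(\cl\Om)$ via $\dom(A^{1/2})=\V\into C(\cl\Om)$, which is what makes the nonlinear coefficient estimates below tractable.

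For the inductive step $P_{p-1}\wedge P_p\Rightarrow P_{p+1}$ (with $p\ge1$) I fix $j\le n-p$ and read the displayed identity as $A\p_t^j u=\p_t^jF-\p_t^{j+2}u$. The term $\p_t^{j+2}u$ lies in $\dom(A^{(p-1)/2})$ by $P_{p-1}$, since $(j+2)+(p-1)\le n+1$. For the forcing, every factor $\p_t^k u$ occurring in the polynomial $\p_t^jF$ has $k\le j+1\le n+1-p$, so by $P_p$ each such factor lies in $C_T\dom(A^{p/2})=C_TH^p(\Om)$; because $H^p(\Om)$ is a Banach algebra in one dimension for $p\ge1$, the product $\p_t^jF$ lies in $C_TH^p\subset C_T\dom(A^{(p-1)/2})$ as a Sobolev space, and the requisite vanishing-trace conditions are supplied exactly as in the proof of Proposition \ref{prop:F-regularity} (equivalently, this is the content of the order-$n$ dependence of $(u,F)$). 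Hence $A\p_t^j u\in\dom(A^{(p-1)/2})$, i.e. $\p_t^j u\in\dom(A^{(p+1)/2})$, giving $P_{p+1}$; iterating from $p=1$ up to $p=n+1$ yields $P_{n+1}$ and therefore $u\in S_T^n$. The quantitative bound propagates at each round through the polynomial forcing estimate \eqref{p_t-f-bound}, so the final constant $K(n,R_1,R_2)$ is a finite composition of such estimates and is manifestly continuous and monotone increasing in $R_1,R_2$.

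The main obstacle, and the crux of the argument, is exactly the coupling above: because $F$ depends on $u$ itself and not merely on $\dot u$, the highest-regularity unknown feeds back into every forcing estimate, so regularity cannot be gained time-level by time-level. The two ideas that defuse it are (i) carrying out the bootstrap \emph{uniformly} in the time index while inducting on the spatial index, and (ii) seeding the scheme at spatial level one by temporal interpolation rather than by the equation. Tracking the constants through the interpolation and through the finitely many algebra estimates is then routine.
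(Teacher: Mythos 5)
Your architecture is genuinely different from the paper's, and one piece of it is an improvement: the paper merely \emph{asserts} that $(u,\dot u)\in C_T\scrH^0$ together with \eqref{En-regularity} imply $(u,\dot u)\in C_T^n\scrH^0$, whereas your $n$-fold-integral-plus-Vandermonde reconstruction actually proves that intermediate-derivative claim. The paper's proof instead runs an induction on $n$: it first applies the induction hypothesis to $u$ itself at order $n-1$ to get $u\in S_T^{n-1}$, and then applies it to the pair $z=A^{1/2}u$, $\tl{F}=A^{1/2}F$, which by the second bullet of Definition \ref{def:reg-dependence} again satisfies the hypotheses of the proposition at order $n-1$; each pass gains half a space derivative uniformly in the time index. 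So both proofs trade time derivatives for space derivatives through the equation; the essential difference is how the forcing term is controlled, and that is exactly where your proof has a genuine gap.

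The proposition is stated for an \emph{abstract} $F$ about which only two things are assumed: $F\in C_T\ltom$ and the order-$n$ dependence of $(u,F)$. Your inductive step controls $\p_t^jF$ by expanding it as a polynomial in $u,\dot u,\dots,\p_t^{j+1}u$ and invoking the Banach-algebra property of $H^p(\Om)$ --- that is, by the concrete product structure of $f(u)\dot u$. Your parenthetical ``equivalently, this is the content of the order-$n$ dependence of $(u,F)$'' is false: order-$n$ dependence is an implication whose \emph{hypothesis} is the diagonal pattern $u\in C_T^j\dom(A^{\frac{n+1-j}{2}})$ for $j\le k$, which at $j=0$ already demands $u\in C_T\dom(A^{\frac{n+1}{2}})$, i.e.\ full spatial regularity of $u$ itself, before it yields anything about $F$. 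Under your flat pattern $P_p$ that input is available only at the terminal stage $p=n+1$, so at every intermediate stage the stated hypothesis gives you no information about $\p_t^jF$, and nothing else in the assumptions does either. Hence your argument proves the result only for forcings with product structure (which happens to cover every application in the paper), not the proposition as stated; the paper's substitution $z=A^{1/2}u$, $\tl{F}=A^{1/2}F$ exists precisely to keep the induction inside the class of pairs to which the abstract dependence hypothesis applies. A secondary point: your seeding constant inevitably depends on $T$ (through the $n$-fold integral and the inverse Vandermonde matrix), so what you obtain is $K(n,R_1,R_2,T)$ rather than the claimed $K(n,R_1,R_2)$; the paper's own constant-tracking is loose on the same step, but you should flag this dependence rather than describe the bookkeeping as routine.
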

\begin{proof}
Throughout the argument below, the norms in the considered spaces can be (inductively) estimated in terms of $R_1$ and $R_2$, thus ultimately verifying \eqref{regular-bound}. We will  focus in detail on proving the claimed regularity.

\textbf{Cases $n=0$, $n=1$.}
By assumption we always have $(u,\dot{u})\in C_T\scrH^0$ and $F\in C_T \ltom$ which takes care of the case $n=0$.
If we in addition assume  $(\dot{u},\ddot{u})\in C_T \scrH^0$, then the equation $\ddot{u}-u_{xx} = F$ implies that $u_{xx}\in C_T\ltom$. Since $u=0$ on the boundary then $u\in C_T\dom(A)$. Conclude:
\[
  (u,\dot{u})\in S^1_T\,.
\]
For $n\geq 2$ proceed by induction: suppose the result of this Proposition holds for $0,1,\ldots, n-1$. Assume \eqref{En-regularity} holds. 
Then let us show  \eqref{u:regularity}. 

\textbf{Case $n\geq 2$.}
Because $(u,\dot{u})\in C_T\scrH^0$, then condition \eqref{En-regularity} implies $(u,\dot{u})\in C_T^n\scrH^0$. As a special case (using $n-1$ instead of maximal $n$), we have
\[
(\p_t^{n-1} u, \p_t^{n-1}\dot{u})\in C_T\scrH^0\,.
\]
Moreover,  functions $(u,F)$ have a fortiori  $(n-1)$-st order dependence, so the induction hypothesis gives
\[
 u\in S_T^{n-1}\,.
\]

Next, by assumption \eqref{En-regularity}, we also have
\begin{equation}\label{first-part}
\p_t^{n} \dot{u}  = \p_t^{n+1} u   \in C_T\ltom\,.
\end{equation}
%
To show that $u\in S^n_T$, it remains to verify that for $j=0,\ldots, n$ we have $\p^j_t u \in C_T \dom(A^{\frac{n+1-j}{2}})$. To this end introduce
\[
 z= A^{1/2}u \txtand \tl{F} = A^{1/2}F.
\]
Since $(u,F)$ have $n$-th order dependence, then $z$ and $\tl{F}$ have $(n-1)$-st order dependence (see Definition \ref{def:reg-dependence}).
As we already know, $u\in S_T^{n-1}$, and via the $(n-1)$-st order dependence of $(u,F)$, we have $F \in C_T^j\dom(A^{\frac{n-1-j}{2}})$ for $j\leq n-1$.  Because $n\geq 2$,  then $A^{1/2} F \in C_T\ltom$; likewise $n\geq 2$ also tells us that $u$ is at least in $C_T\dom(A)$, so $A^{1/2}u_{xx} = A^{3/2} u = \p_{xx} A^{1/2}u \in C_T \dom(A^{-1/2}) $. 

Consequently, applying $A^{1/2}$ to the equation for $u$, we conclude that  $z=A^{1/2}u$ is a weak solution to $\lp \tl{F}, z(0), \dot{z}(0) \rp$ with $\tl{F}\in C_T\ltom$ and $(z(0),\dot{z}(0))\in \scrH^0$.  By  \eqref{En-regularity} we also have 
\[
 (\p_t^{n-1} z, \p_t^{n-1} \dot{z}) \in C_T \scrH^0\,.
\]
The induction hypothesis now states that 
\[
 z \in S_T^{n-1}  \txtfor j=0,1,\ldots,n\,.
\]
It implies that
\[
 u \in C_T^j \dom(A^\frac{n+1-j}{2}) \txtfor j=0,1,\ldots, n,
\]
as desired. From here, along with \eqref{first-part}, it follows that $u\in S_T^n$ which completes the proof of the  implication ``case $n-1 \implies $ case $n$" for the induction argument.
\end{proof}

The next result complements the previous  proposition, demonstrating that the same regularity $S_T^n$ of solutions can be inferred from a priori differentiability in space, rather than in time.
\begin{proposition}\label{prop:differentiable-regular}
Suppose $u$ is a weak solution on $[0,T]$  of   $\lp F, u_0, u_1\rp$ with $F\in C_T\ltom$. Assume $(u,F)$  have $n$-th order dependence (Definition \ref{def:reg-dependence}) for some $n\in \bbN\cup\{0\}$.
If $u$ is regular of order $n$, i.e., $\| (u,\dot{u})\|_{ C_T \scrH^n}\leq R<\infty$, then
\[
 u\in S_T^n
\]
and 
\begin{equation}\label{bound-energy-n-K}
\|(u,\dot{u})\|_{S_T^n} \leq K(n,R)
\end{equation}
with $K$ dependent only on $n$ and $R$ and continuous monotone increasing with respect to  parameter $R$.
\end{proposition}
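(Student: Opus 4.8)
The plan is to induct on $n$, mirroring the proof of Proposition~\ref{prop:regular-differentiable} but now converting the given \emph{spatial} regularity into temporal regularity through the equation $\ddot u=-Au+F$ itself. The base case $n=0$ is free: by the definition of a weak solution \eqref{weak-regularity} one already has $u\in S_T^0$, with $\|(u,\dot u)\|_{S_T^0}=\|(u,\dot u)\|_{C_T\scrH^0}\le R$. For the inductive step I assume the proposition for order $n-1$ and suppose $(u,\dot u)\in C_T\scrH^n$ with $\|(u,\dot u)\|_{C_T\scrH^n}\le R$.

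The descent is carried out by setting $z=A^{1/2}u$ and $\tilde F=A^{1/2}F$. A direct computation with \eqref{norm-cHm} gives $\|(z,\dot z)\|_{C_T\scrH^{n-1}}=\|(u,\dot u)\|_{C_T\scrH^n}\le R$, so $z$ is regular of order $n-1$. To invoke the induction hypothesis on $z$ I must check that $z$ is a weak solution of $\lp\tilde F,z(0),\dot z(0)\rp$ with $\tilde F\in C_T\ltom$, and this is precisely where order-$n$ dependence is used: since $u\in C_T^j\dom(A^{(n+1-j)/2})$ for $j\le 1$ (read off directly from $(u,\dot u)\in C_T\scrH^n$), order-$n$ dependence yields $F\in C_T\dom(A^{n/2})\subset C_T\dom(A^{1/2})$ (using $n\ge 1$), so that $\tilde F=A^{1/2}F\in C_T\ltom$. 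Applying $A^{1/2}$ to the equation shows $z$ solves the transformed problem, and by the second remark following Definition~\ref{def:reg-dependence} the pair $(z,\tilde F)$ has order-$(n-1)$ dependence. The induction hypothesis then gives $z\in S_T^{n-1}$, i.e. $A^{1/2}u\in C_T^j\dom(A^{(n-j)/2})$ for $j=0,\dots,n$, which is the same as $u\in C_T^j\dom(A^{(n+1-j)/2})$ for $j=0,1,\dots,n$.

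It remains only to supply the top time-derivative $j=n+1$. For this I differentiate the identity $\ddot u=-Au+F$ in time, the point being that its right-hand side is \emph{already} known to lie in $C_T^{n-1}\ltom$: from the regularity just obtained, $A\p_t^{j}u\in C_T\dom(A^{(n-1-j)/2})\subset C_T\ltom$ for $j\le n-1$, so $Au\in C_T^{n-1}\ltom$; and order-$n$ dependence gives $F\in C_T^{n-1}\dom(A^{1/2})\subset C_T^{n-1}\ltom$. Hence $\ddot u$ inherits $n-1$ continuous time-derivatives valued in $\ltom$, so $u\in C_T^{n+1}\ltom$ with $\p_t^{n+1}u=-A\,\p_t^{n-1}u+\p_t^{n-1}F\in C_T\ltom$, completing $u\in S_T^n$. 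All regularity statements above come with matching norm estimates—the descent preserves $R$, the embeddings $\dom(A^{s/2})\into C(\cl\Om)$ are bounded, and (in the application $F=f(z)\dot z$) the dependence of $F$ on $u$ is polynomial, cf.\ \eqref{p_t-f-bound}—so the bounds assemble inductively into $\|(u,\dot u)\|_{S_T^n}\le K(n,R)$ with $K$ continuous and monotone increasing in $R$.

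I expect the main obstacle to be the rigorous justification of the formal manipulations rather than the index arithmetic: verifying that $z=A^{1/2}u$ is genuinely a weak solution in the sense of Definition~\ref{def:weak}, i.e.\ that the closed operator $A^{1/2}$ commutes with the weak time-derivative appearing in \eqref{weak-d-dt}, and that the inference ``$\ddot u$ equals something in $C_T^{n-1}\ltom$, hence $\ddot u\in C_T^{n-1}\ltom$'' is legitimate at the stated level of regularity. Both hinge on keeping the forcing in $C_T\ltom$ (respectively $C_T\dom(A^{1/2})$) at every stage, which is exactly what order-$n$ dependence secures, so that the linear theory of Lemma~\ref{lem:linear} and Proposition~\ref{prop:linear-problem} applies and legitimizes passing derivatives through the equation. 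The one place a slip would be fatal is matching the fractional powers of $A$ across the $A^{1/2}$-descent, where an off-by-one in the indices would break the alignment between $\scrH^n$ and $S_T^n$.
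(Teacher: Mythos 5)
Your proof is correct and follows essentially the same route as the paper's: induction on $n$, the descent $z=A^{1/2}u$, $\tilde F=A^{1/2}F$ combined with the order-$(n-1)$ dependence of $(z,\tilde F)$, and recovery of the top derivative $\p_t^{n+1}u$ from the equation $\ddot u=-Au+F$ applied to $\p_t^{n-1}u$. The only (harmless) differences are cosmetic: you start the induction at $n=1$ rather than treating $n=1$ as a separate base case, and you justify $\tilde F\in C_T\ltom$ directly from order-$n$ dependence at level $k=1$ instead of first applying the induction hypothesis to $u$ itself to get $u\in S_T^{n-1}$.
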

\begin{proof}
In the course of the proof, the bound \eqref{bound-energy-n-K} can be traced inductively to depend only on $\|(u,\dot{u})\|_{C_T\scrH^n}$. To keep the exposition concise the  argument will focus on the regularity.

If $n=0$, the claimed regularity simply matches that of weak solutions. For $n=1$ we are given $u\in C_T\dom(A)$ and $\dot{u} \in C_T\dom(A^{1/2})$. Solving  $\ddot{u}= u_{xx} - F \in C_T\ltom$ verifies that $u\in C_T \dom(A)\cap C_T^1 \ltom =  S^1_T$.

Proceed by induction. Fix $n\geq 2$, suppose the statement holds for all $\tl{n}\leq n-1$ and assume  $\|(u,\dot{u})\|_{C_T\scrH^n}$ is finite. A fortiori it follows that $(u,\dot{u})\in C_T\scrH^{n-1}$. Then $u\in S_T^{n-1}$ by the induction hypothesis.

Define   $z\dfn A^{1/2}u$, then using $u\in S_T^{n-1}$ it follows as in the proof of Proposition \ref{prop:regular-differentiable}    that $z$ is a weak solution to $\lp A^{1/2}F, z(0), \dot{z}(0)\rp$ with $A^{1/2}F\in C_T\ltom$. Moreover from the assumption $u\in C_T \scrH^{n}$ we also have that  $z \in C_T\scrH^{n-1}$, i.e., is regular of order $n-1$. Thus by the induction hypothesis
\[
 z \in S_T^{n-1}\,,
\]
equivalently
\begin{equation}\label{deriv-up-to-n}
  u \in C_T^j \dom(A^\frac{n+1-j}{2}) \txtfor j=0,1,\ldots ,n.
\end{equation}
The only remaining step from here to proving $u\in S_T^n$ is  to show that $\p_t^{n+1}u \in C_T\ltom$. To this end define
\[
 w\dfn \p_t^{n-1} u\,.
\]
Then by \eqref{deriv-up-to-n}
\[
 (w,\dot{w}) \in C_T \scrH^1\,.
\]
Since $u\in S_T^{n-1}$ and by  the $(n-1)$-st order  dependence (implied by $n$-th order dependence) of  $u$ and $F$, we deduce from \eqref{deriv-up-to-n}  that  $\p_t^{n-1}F \in C_T\ltom$. So
\[
 \ddot{w} = w_{xx} - \p_t^{n-1} F \in C_T\ltom
\]
confirming that 
\[
\ddot{w} =  \p_t^{n+1} u \in C_T \ltom
\]
as desired. Thus $u\in S_T^n$ completing the induction argument.
\end{proof}

\subsection{Local unique solutions}\label{sec:local}

It is known that  \system{} possesses unique solutions \cite{ram-stre:02:TAMS}. Here we extend this result to regular solutions as well. First we formulate it for local in time solutions.

\begin{theorem}\label{thm:local}
Suppose $\|(u_0,u_1)\|_{\scrH^n} = R<\infty$ for a non-negative integer $n$. 
Then there exists $T=T(R)>0$ such that system \system{} has a local unique solution that is regular of order $n$ on interval $[0,T]$.
\end{theorem}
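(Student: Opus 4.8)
The plan is to construct the solution by successive approximations, combining a high-order a priori bound with a contraction estimate in the weak topology. Define the solution operator $\Phi$ sending a function $v$ to the unique weak solution $u=\Phi(v)$ of the linear inhomogeneous problem $\lp -f(v)\dot v,\,u_0,u_1\rp$ furnished by Proposition~\ref{prop:linear-problem}, and generate the iterates $u^{k+1}=\Phi(u^k)$ starting, say, from the free solution $u^0=\cS(t)(u_0,u_1)$ of Lemma~\ref{lem:linear} (which lies in $S_T^n$ with norm $\lesssim R$). The first task is to check that $\Phi$ maps $S_T^n$ into itself: if $v\in S_T^n$ then Proposition~\ref{prop:F-regularity} (with $s=0$) gives $\p_t^jF\in C_T\dom(A^{(n-j)/2})$ for $j\le n$, where $F=f(v)\dot v$; differentiating the linear equation $n$ times and invoking Proposition~\ref{prop:linear-problem} yields $(\p_t^nu,\p_t^n\dot u)\in C_T\scrH^0$, after which Proposition~\ref{prop:regular-differentiable}---whose hypotheses hold because $(u,F)$ have order-$n$ dependence---upgrades this to $u\in S_T^n$.

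The uniform bound on the iterates rests on the higher-order energy identity. Since $\p_t^nu^{k+1}$ solves the linear wave equation with forcing $\p_t^nF^k$, where $F^k=f(u^k)\dot u^k$, the identity \eqref{energy-identity:0} applied at order $n$ reads
\[
 E^{(n)}_{u^{k+1}}(t)=E^{(n)}_{u^{k+1}}(0)-\int_0^t\bigl(\p_t^nF^k,\p_t^{n+1}u^{k+1}\bigr)_{_0}\,ds .
\]
Because all iterates share the same compatible initial time-derivatives $\p_t^ju^k(0)$ (determined by $(u_0,u_1)$ through the equation), the initial energies $E^{(n)}_{u^{k+1}}(0)$ are fixed by the data and controlled by $R$. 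Bounding $|\p_t^nF^k|_{_0}$ by the polynomial estimate \eqref{p_t-f-bound} in terms of $\|u^k\|_{S_T^n}\le\rho$, together with the analogous order-$0$ identity, a Gronwall/comparison argument gives $\|(\p_t^nu^{k+1},\p_t^n\dot u^{k+1})\|_{C_T\scrH^0}\le R_2(\rho,T)$ and $\|(u^{k+1},\dot u^{k+1})\|_{C_T\scrH^0}\le R_1(\rho,T)$, with $R_1,R_2$ tending to data-dependent constants as $T\to0$. Feeding these into the quantitative bound $\|u^{k+1}\|_{S_T^n}\le K(n,R_1,R_2)$ of Proposition~\ref{prop:regular-differentiable} and using the continuity and monotonicity of $K$, I would fix $\rho$ slightly above the limiting value and then take $T=T(R)$ small enough that $K(n,R_1,R_2)\le\rho$, closing the induction $\|u^k\|_{S_T^n}\le\rho$ for all $k$.

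The contraction is established in the weak norm. For two inputs the difference $w=\Phi(v_1)-\Phi(v_2)$ solves the linear problem with zero data and forcing $-(f(v_1)\dot v_1-f(v_2)\dot v_2)$; splitting this with the factorization \eqref{f:factor} and the embedding $\V\hookrightarrow C(\cl\Om)$ gives $|f(v_1)\dot v_1-f(v_2)\dot v_2|_{_0}\le C(\rho)\|v_1-v_2\|_{\scrH^0}$, so the order-$0$ energy identity produces $\|w\|_{C_T\scrH^0}\le C(\rho)\,T\,\|v_1-v_2\|_{C_T\scrH^0}$, a contraction once $T$ is small. Hence $(u^k)$ is Cauchy in $C_T\scrH^0$ and converges to some $u$. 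Passing to the limit in the variational identity is legitimate since $f(u^k)\dot u^k\to f(u)\dot u$ in $C_T\ltom$ by the same factorization, so $u$ is a weak solution of \system{}; uniqueness among order-$n$ regular solutions follows from the identical weak-norm estimate applied to two solutions.

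The step I expect to be the main obstacle is showing that the limit $u$ is genuinely regular of order $n$, i.e. \emph{strongly} continuous into $\scrH^n$, rather than merely bounded in $L^\infty_T\scrH^n$ with weak continuity. The uniform $S_T^n$ bounds give $(\p_t^nu,\p_t^n\dot u)\in L^\infty_T\scrH^0$, and interpolating these against the $C_T\scrH^0$ convergence of the iterates yields strong continuity at every lower order $n'<n$. To reach the top order I would argue that $\p_t^nu$ is a weak solution of the linear wave equation with forcing $\p_t^nF\in C_T\ltom$, so that the linear energy identity \eqref{energy-identity:0} makes $t\mapsto E^{(n)}_u(t)$ continuous; continuity of the norm together with weak continuity in the Hilbert space $\scrH^0$ then forces strong continuity of $(\p_t^nu,\p_t^n\dot u)$. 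A final application of Proposition~\ref{prop:regular-differentiable} promotes $u$ to $S_T^n$, completing the proof of local existence, uniqueness, and order-$n$ regularity on $[0,T(R)]$.
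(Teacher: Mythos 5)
Your route is genuinely different from the paper's, and it is the harder one. The paper runs a single Banach fixed--point argument directly in $C_T\scrH^n$: the nonlinearity $(\phi_0,\phi_1)\mapsto f(\phi_0)\phi_1$ is locally Lipschitz from $\scrH^n$ into $\dom(A^{n/2})$ (this is the content of \eqref{fixed-pt-1}), and since $\cS(t)$ is unitary on $\scrH^n$ the Duhamel map $\Lam_T$ is a contraction of a ball of $C_T\scrH^n$ for small $T$, as in \eqref{Lambda-contr}. Existence, uniqueness, \emph{and} the strong $\scrH^n$-continuity of the solution then all come out of the fixed point at once. Your scheme---uniform bounds in the high norm, contraction only in $C_T\scrH^0$, then passage to the limit---is the classical strategy for quasilinear problems with derivative loss; here the problem is semilinear with no derivative loss, so the low-norm contraction buys nothing and instead creates the one difficulty the paper's proof never faces: upgrading the limit from $L^\infty_T\scrH^n$ with weak continuity to $C_T\scrH^n$.

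Precisely at that step your argument has a genuine gap. You propose to conclude by observing that $\p_t^n u$ is a weak solution of the linear problem with forcing $\p_t^n F\in C_T\ltom$ and then invoking the energy identity \eqref{energy-identity:0}. But $\p_t^n F$ contains the term $f(u)\,\p_t^{n+1}u$, and at this stage $\p_t^{n+1}u$ is only a weak-$*$ limit, i.e.\ an element of $L^\infty_T\ltom$; its time-continuity with values in $\ltom$ is exactly (part of) what you are trying to prove, so the hypothesis $\p_t^n F\in C_T\ltom$ is circular. The same circularity blocks a direct appeal to Proposition \ref{prop:linear-problem} or Proposition \ref{prop:regular-differentiable}: both presuppose that the function they are applied to is a weak solution in the sense of Definition \ref{def:weak}, hence already strongly continuous into $\scrH^0$. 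The gap is repairable, but it needs an ingredient absent from your sketch: for instance, apply Proposition \ref{prop:linear-problem} with forcing merely in $L^2_T\ltom$ to produce a strongly continuous solution $w\in S_T^0$ of $\lp \p_t^n F, \p_t^n u(0), \p_t^{n+1}u(0)\rp$, and then identify $w=\p_t^n u$ by a uniqueness argument valid in the larger class $L^\infty_T\scrH^0$ (in 1D this can be done mode by mode in the eigenfunction expansion); alternatively, use the Kato--Majda device of weak continuity plus a $\limsup$ energy inequality at $t=0^+$ and a restart-by-uniqueness argument at later times. The remaining steps of your proposal (invariance of $S_T^n$ under the iteration, the Gronwall bound closing the induction $\|u^k\|_{S_T^n}\le\rho$ for small $T$, the $C_T\scrH^0$ contraction, and uniqueness) are sound, up to the minor point that the initial values $\p_t^j u^k(0)$ of the early iterates need not all coincide---they only stabilize after finitely many iterations---but they are uniformly bounded in terms of the data, which is all your energy argument actually needs.
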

\begin{proof}
\textbf{Step 1: the spaces.} Note that for $\psi\in \dom(A^{n/2})$
\[
 \| \psi\|_{\dom(A^{n/2})} \quad
 \lesssim\quad |\psi|_{_0} + \left| \p_x^{n}\,\psi\right|_{_0} 
 \,.
\]
Moreover, for $(\psi_0,\psi_1) \in \scrH^n$ we have
\begin{equation}\label{C-Xn-estimate}
\left\|\psi_0\right\|_{C^{n}(\cl{\Om})}
 +\left\|\psi_1\right\|_{C^{n-1}(\cl{\Om})}  + \|\psi_1\|_{W^{n,2}(\Om)} \quad\lesssim\quad    \| (\psi_0,\psi_1)\|_{\scrH^n}
\end{equation}
with the temporary notational convention  $ C^{-1}(\cl{\Om}) \dfn \ltom $.

\textbf{Step 2: contraction mapping.} 
Let $t\mapsto \cS(t)$ be the semigroup for the linear wave equation with data $y_0\in \scrH^n$. Note that if  $z$ is regular of order $n$, then 
by Proposition \ref{prop:F-regularity} we would get
 \begin{equation}\label{thm:local:as:f}
f(z)\dot{z} \in C_T \dom(A^{n/2})\,.
\end{equation}
With this in mind introduce the operator $\Lam_T$ on $C_T\scrH^n$,
\[
 (\Lam_T \phi)(t) \dfn \cS(t)y_0 +\int_0^t \cS(t-s) \vecn{0 \\ f(\phi_0(s)) \phi_1(s)}  ds\quad \text{for any}
 \quad \phi = (\phi_0,\phi_1) \in C_T\scrH^n\,.
\]
For  $t\in [0,T]$ we have
\begin{equation}\label{Lambda-diff}
\begin{split}
\left\| \Lam_T \phi(t) - \Lam_T \tl{\phi}(t) \right\|_{\scrH^n} \leq & \int_0^t \left\| \cS(t-s) \vecn{0 \\ f(\phi_0(s)) \phi_1(s) - f(\tl{\phi}_0(s)) \tl{\phi}_1(s)}  \right\|_{\scrH^n} ds\,.
\end{split}
\end{equation}
From the local Lipschitz property  \eqref{f:factor} of $f$ we obtain
\[
\begin{split}
 f(\phi_0)\phi_1 - f(\tl{\phi}_0)\tl{\phi}_1  = &
 \left[ f(\phi_0)\phi_1 -  f(\tl{\phi}_0)\phi_1\right] + \left[f(\tl{\phi}_0)\phi_1  -f(\tl{\phi}_0)\tl{\phi}_1\right] \\
  =&
 \phi_1 M(\phi_0,\tl{\phi}_0)(\phi_0 - \tl{\phi}_0) + f(\tl{\phi}_0)(\phi_1 - \tl{\phi}_1)\,.\,
\end{split}
\]
Now we will rely on the fact that a priori $\phi,\tl{\phi}\in C_T\scrH^n$ gives:
\[
 \phi_0,\; \tl{\phi}_0 \in C_T C^n(\cl{\Om}), \quad
  \phi_1,\; \tl{\phi}_1 \in C_T C^{n-1}(\cl{\Om}) \txtand  \phi_1, \tl{\phi}_1 \in  C_T W^{2,n}(\Om)\,.
\]
Estimate,
\begin{equation}\label{fixed-pt-1}
\begin{split}
&\left\| f(\phi_0)\phi_1 - f(\tl{\phi}_0)\tl{\phi}_1 \right\|_{\dom(A^{n/2})}\\
\lesssim &
\left| \phi_1 M(\phi_0,\tl{\phi}_0)(\phi_0 - \tl{\phi}_0) + f(\tl{\phi}_0)(\phi_1 - \tl{\phi}_1) \right|_{_0} 
+ \left| \p_x^n\big[ \phi_1 M(\phi_0,\tl{\phi}_0)(\phi_0 - \tl{\phi}_0) + f(\tl{\phi}_0)(\phi_1 - \tl{\phi}_1) \big]\right|_{_0}\\
\leq &
\left|  \phi_1 M(\phi_0,\tl{\phi}_0)(\phi_0 - \tl{\phi}_0) + f(\tl{\phi}_0)(\phi_1 - \tl{\phi}_1) \right|_{_0}\\
&+
\left| \sum_{i+j+k=n} c_{ijk}\,\p_x^i\phi_1 \cdot \p_x^j  M(\phi_0,\tl{\phi}_0)\cdot \p_x^k(\phi_0 - \tl{\phi}_0) \right|_{_0}
+
\left| \sum_{i+j=n} d_{ij}\, \p_x^i f(\tl{\phi}_0)\cdot \p_x^j  (\phi_1 - \tl{\phi}_1)\right|_{_0}\\
\leq & 
\cP_1\left\{ \|\phi_0\|_{C^{n}(\cl{\Om})} , \|\tl{\phi}_0\|_{C^{n}(\cl{\Om})}\right\} \cdot 
\|\phi_1\|_{W^{n,2}(\Om)}
\cdot
\|\phi_0 - \tl{\phi}_0 \|_{C^{n}(\cl{\Om})}
+  \cP_2\left\{ \|\tl{\phi}_0\|_{C^{n}(\cl{\Om})}\right\} \cdot \| \phi_1-\tl{\phi}_1\|_{W^{2,n}(\Om)}
\end{split}
\end{equation}
where $\cP_i\{\cdot\}$ are a  polynomials in the indicated variables with positive coefficients. Now, from \eqref{C-Xn-estimate}
\[
 \|\phi_0-\tl{\phi}_0\|_{C^n(\cl{\Om})} + \|\phi_1-\tl{\phi}_1\|_{W^{2,n}(\Om)} \lesssim \|\phi-\tl{\phi}\|_{\scrH^n}\,.
\]
So continuing \eqref{fixed-pt-1} we get
\[
\begin{split}
\left\| f(\phi_0)\phi_1 - f(\tl{\phi}_0)\tl{\phi}_1 \right\|_{\dom(A^{n/2})}
\leq  & \cP_3 \bigg\{ \|\phi_0\|_{C^{n}(\cl{\Om})}
, \|\tl{\phi}_0\|_{C^{n}(\cl{\Om})}, \|\phi_1\|_{W^{n,2}(\Om)} \bigg\} \|\phi-\tl{\phi}\|_{\scrH^{n}}\\
\leq & \cP_4 \bigg\{ \|\phi\|_{\scrH^{n}},\|\tl{\phi}\|_{\scrH^{n}} \bigg\} \|\phi-\tl{\phi}\|_{\scrH^{n}}\,.
\end{split}
\]

Because the group $\cS(t)$   is unitary on $\dom(\bbA^{n})$  then  \eqref{Lambda-diff} yields
\begin{equation}\label{Lambda-contr}
 \left\| \Lam_T \phi - \Lam_T \tl{\phi} \right\|_{C_T\scrH^n}
 \leq T  \cP_4 \left\{ \|\phi\|_{C_T\scrH^n} , \|\tl{\phi}\|_{C_T\scrH^n}\right\} \|\phi-\tl{\phi}\|_{C_T\scrH^n}\,.
\end{equation}
If $\phi$ and $\tl{\phi}$ come from a bounded set, then choosing $T$ small enough implies that $\Lambda_T$ is a contraction.
 
\textbf{Step 3: invariance.} Finally, we also need   $\Lam_T$ to map the the ball $B_{R}(0)$ in $C_T\scrH^n$ into itself. According to \eqref{Lambda-contr} for that it suffices that $R$ satisfy
\[
  T \cP_4\{R, R\} 2R \leq R\,.
\]
For which we can impose
\begin{equation}\label{T-invariance}
  T <  1/ (2 \cP_4\{R, R\})\,.
\end{equation}
Now the contraction mapping principle implies the claimed local unique solvability.
\end{proof}

\subsection{Energy identities and global existence}\label{sec:global}

The global existence result is based on a priori bounds on the energy.
\begin{proposition}[Extension of local solutions]\label{prop:extension}
Let $f$ be as in \eqref{as:f} with exponent $2m$, $m\in \bbN$. Assume $u$ is a weak solution that is regular of order $n$, defined on some right-maximal interval $I_{max} =[0,T_{max})$.  Suppose   $(u,  \dot{u}) \in C^n_T\scrH^0$ and that  there exists a continuous function $\psi$ on $\bbR_+$ such that 
\[
 E^{(n)}_u(t)\leq \psi(t) \txton I_{max}\,.
\]
Then $T_{\max}=\infty$.
\end{proposition}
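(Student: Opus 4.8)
The plan is to show that the state $(u(t),\dot u(t))$ stays bounded in $\scrH^n$ on all of $I_{max}$, and then invoke the local existence theorem to contradict the maximality of $T_{max}$. So suppose, toward a contradiction, that $T_{max}<\infty$.

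First I would secure a uniform bound on the finite-energy norm. Since $u$ is a weak solution to $\lp F, u_0, u_1\rp$ with $F=f(u)\dot u\in L^2_T\ltom$ (Proposition \ref{prop:F-regularity}), the energy identity of Proposition \ref{prop:linear-problem} gives
\[
E_u(t) + \intt (f(u)\dot u, \dot u)_{_0}\,ds = E_u(0).
\]
As $f\geq 0$ makes the integrand $f(u)|\dot u|^2$ non-negative, $E_u$ is non-increasing, so $\|(u,\dot u)\|_{C_T\scrH^0}^2=2\sup_t E_u(t)\leq 2E_u(0)=:R_1^2$, a bound independent of the right endpoint.

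Next I would turn the energy hypothesis into a uniform higher-order bound. Because $\psi$ is continuous and $[0,T_{max}]$ is compact, $\psi$ is bounded there, hence $E^{(n)}_u(t)\leq \sup_{[0,T_{max}]}\psi=:\tfrac12 R_2^2$ on $I_{max}$. Recalling $E^{(n)}_u=\tfrac12\|(\p_t^n u,\p_t^n\dot u)\|_{\scrH^0}^2$ and using the hypothesis $(u,\dot u)\in C^n_T\scrH^0$, this supplies exactly condition \eqref{En-regularity} with $\|(\p_t^n u,\p_t^n\dot u)\|_{C_T\scrH^0}\leq R_2$. Since $(u,F)$ has order $n$ dependence (the Example following Definition \ref{def:reg-dependence}), Proposition \ref{prop:regular-differentiable} applies on every $[0,T']$ with $T'<T_{max}$ and yields $u\in S^n_{T'}$ together with $\|(u,\dot u)\|_{S^n_{T'}}\leq K(n,R_1,R_2)=:M$. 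The key feature is that $K$ depends only on $n,R_1,R_2$ and not on $T'$; letting $T'\uparrow T_{max}$ therefore gives $\|(u(t),\dot u(t))\|_{\scrH^n}\leq M$ for all $t\in I_{max}$.

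Finally I would close by contradiction with local existence. By Theorem \ref{thm:local} there is a time $T^\ast=T(M)>0$ such that any datum with $\scrH^n$-norm at most $M$ launches a regular-of-order-$n$ solution over an interval of length $T^\ast$. Picking $t_0\in(T_{max}-T^\ast,T_{max})$ and solving from $(u(t_0),\dot u(t_0))$ produces such a solution on $[t_0,t_0+T^\ast]$, which extends beyond $T_{max}$; by uniqueness it agrees with $u$ on the overlap and continues $u$ past $T_{max}$, contradicting right-maximality of $I_{max}$. Hence $T_{max}=\infty$. I expect the only genuine subtlety to be the uniformity of the $\scrH^n$ bound as $T'\uparrow T_{max}$, which hinges entirely on the $T$-independence of the constant $K$ in Proposition \ref{prop:regular-differentiable} and on the boundedness of the continuous $\psi$ over the compact interval $[0,T_{max}]$.
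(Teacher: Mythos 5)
Your proposal is correct and follows essentially the same route as the paper: convert the energy bound $E^{(n)}_u\leq\psi$ into a uniform $C_T\scrH^n$ (indeed $S_T^n$) bound via Proposition \ref{prop:regular-differentiable}, then use the data-size-dependent local existence time of Theorem \ref{thm:local} to extend past any finite $T_{\max}$. Your write-up is in fact slightly more complete than the paper's, since you explicitly supply the uniform finite-energy bound $R_1$ (via monotonicity of $E_u$ from the energy identity and $f\geq 0$) that Proposition \ref{prop:regular-differentiable} requires but the paper leaves implicit.
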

\begin{proof}
By Proposition \ref{prop:F-regularity},  $u$ and the term  $F = f(u)\dot{u}$ have $n$-th order dependence. Recall that energy $E^{(n)}_u$ controls the $C_T\scrH^n$ norm of the solution (in fact, it controls the $L^\infty_T\scrH^n$ norm, but the continuity with values in $\scrH^n$ is implied by  the definition of regular solution). Then 
there is a constant $K$ dependent on $ \sup\{ |\psi(t)| : 0\leq t\leq T_{\max}\}$ such that
 \[
\|  (u,\dot{u})\|_{C_T\scrH^n}\leq K
 \]
 
for any $T< T_{\max}$.  Hence by Theorem \ref{thm:local} any initial data of the form $v_0 = u(t)$ and $v_1 = \dot{u}(t)$ for $t\in [0,T_{\max})$ can be extended to a solution that exists for another $\Del T =\Del T(K)$ time units, independently of $t\in [0,T_{\max})$. Hence $T_{\max}$ cannot be finite.
\end{proof}

\begin{lemma}[Energy identity for regular solutions]\label{lem:energy-regular}
Let $f$ be as in \eqref{as:f} with exponent $2m$, $m\in \bbN$. Suppose $u$ is a weak solution to \system{} on $[0,T]$. If $u$ is regular of order $n$, then for $k\leq n$ we have $(u,\dot{u})\in C_T^k\scrH^0$, $f(u)\dot{u} \in C_T^k\ltom$ and 
\begin{equation}\label{k-energy-identity}
  E^{(k)}_u(t) + \intt (\p_t^k [f(u)\dot{u}] ,\p_t^k \dot{u})_{_0} = E^{(k)}_u(0)\,.
\end{equation}

\end{lemma}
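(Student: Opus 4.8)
The plan is to prove Lemma \ref{lem:energy-regular} by differentiating the PDE in time $k$ times and applying the energy identity for the resulting linear inhomogeneous problem. First I would observe that since $u$ is regular of order $n$, Proposition \ref{prop:F-regularity} (applied with the identification $F = f(u)\dot{u}$) guarantees that $u\in S_T^n$, so that all the necessary time-derivatives exist with the required regularity. In particular, for $k\le n$ the bound \eqref{p_t-f-bound} ensures $\p_t^k[f(u)\dot{u}]\in C_T\ltom$, and $(u,\dot{u})\in C_T^k\scrH^0$ follows directly from $u\in S_T^n$ since $u\in C_T^j\dom(A^{\frac{n+1-j}{2}})$ for $j\le n+1$ gives in particular $\p_t^k u\in C_T\dom(A^{1/2})$ and $\p_t^{k+1}u\in C_T\ltom$. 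This disposes of the regularity claims preceding \eqref{k-energy-identity} and guarantees that $E^{(k)}_u(t)$ is a well-defined continuous function of $t$.

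Next I would set $w\dfn \p_t^k u$ and observe that $w$ solves the linear inhomogeneous wave problem $\lp G, w(0), \dot{w}(0)\rp$ with forcing $G\dfn -\p_t^k[f(u)\dot{u}]$. Indeed, differentiating the identity $\ddot{u}-u_{xx}+f(u)\dot{u}=0$ exactly $k$ times in time (legitimate because $u\in S_T^n$ provides the required space-time regularity and commutation of $\p_t^k$ with $\p_{xx}$) yields $\ddot{w}-w_{xx}=G$ in the distributional sense, while the boundary conditions persist because each $\p_t^j u$ inherits the zero trace. From the regularity just established, $(w(0),\dot{w}(0))\in\scrH^0$ and $G\in C_T\ltom\subset L^2_T\ltom$, so $w$ is the unique weak solution in $S_T^0$ by Proposition \ref{prop:linear-problem}, and the energy identity \eqref{energy-identity:0} applies to $w$.

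I would then simply transcribe \eqref{energy-identity:0} for $w$. Since $E_w(t)=\half|\dot{w}|_{_0}^2+\half|w|_{_1}^2 = \half|\p_t^k\dot{u}|_{_0}^2+\half|\p_t^k u|_{_1}^2 = E^{(k)}_u(t)$ by Definition \ref{definition-energy}, and the forcing term is $(G(s),\dot{w}(s))_{_0}=-(\p_t^k[f(u)\dot{u}],\p_t^k\dot{u})_{_0}$, the identity \eqref{energy-identity:0} rearranges to
\[
 E^{(k)}_u(t) + \intt (\p_t^k[f(u)\dot{u}],\p_t^k\dot{u})_{_0}\,ds = E^{(k)}_u(0),
\]
which is exactly \eqref{k-energy-identity}.

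The main obstacle I anticipate is the rigorous justification that differentiating the weak formulation \eqref{weak-d-dt} $k$ times in time produces a bona fide weak solution $w=\p_t^k u$ of the forced linear problem in the sense of Definition \ref{def:weak}, rather than merely a formal identity. The cleanest route is to avoid differentiating the weak form directly and instead invoke the characterization already in hand: because $u\in S_T^n$, one has $w=\p_t^k u\in C_T\dom(A^{1/2})\cap C_T^1\ltom$ with $\ddot{w}=\p_t^{k+2}u\in C_T\dom(A^{-1/2})$, and the PDE read distributionally gives $\ddot{w}=w_{xx}-\p_t^k[f(u)\dot{u}]$; verifying items (i)--(iii) of Definition \ref{def:weak} for $w$ is then a matter of pairing against test functions and using that $\p_t^k$ commutes with the $\ltom$ inner product under the available continuity, so the interchange of $\p_t^k$ and $\tfrac{d}{dt}$ in \eqref{weak-d-dt} is justified by the continuity of $t\mapsto(\p_t^j\dot{u}(t),\phi)_{_0}$ for $j\le k+1$. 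Once $w$ is identified as the weak solution of the linear problem, the remainder is an immediate application of Proposition \ref{prop:linear-problem} and requires no further estimates.
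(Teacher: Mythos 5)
Your overall strategy coincides with the paper's: set $w=\p_t^k u$, identify it as a weak solution of a linear inhomogeneous wave problem whose forcing is (up to sign convention) $\p_t^k[f(u)\dot u]$, and transcribe the linear energy identity \eqref{energy-identity:0}. However, there is a genuine gap at your first step. You assert that, because $u$ is regular of order $n$, Proposition \ref{prop:F-regularity} ``guarantees that $u\in S_T^n$.'' That proposition cannot deliver this: its \emph{hypothesis} is $z\in S_T^n$, and its \emph{conclusion} concerns the regularity of the composite term $f(z)\dot z$, not of $z$ itself; invoking it to produce $u\in S_T^n$ is circular, since you would need $u\in S_T^n$ before it applies. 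The hypothesis ``regular of order $n$'' gives only $(u,\dot u)\in C_T\scrH^n$, i.e.\ spatial smoothness and a single time derivative; upgrading this to $u\in S_T^n$ (i.e.\ $\p_t^j u\in C_T\dom(A^{\frac{n+1-j}{2}})$ for all $j\le n+1$) is a nontrivial bootstrap through the equation. In the paper this is exactly the role of Proposition \ref{prop:differentiable-regular} (an induction on $n$), whose hypotheses are that $u$ is regular of order $n$ \emph{and} that $(u,F)$ have order-$n$ dependence --- and it is this dependence, not the $S_T^n$ regularity, that Proposition \ref{prop:F-regularity} furnishes. So the correct chain is: Proposition \ref{prop:F-regularity} gives the order-$n$ dependence of $(u, f(u)\dot u)$; Proposition \ref{prop:differentiable-regular} then yields $u\in S_T^n$; only afterwards do $(u,\dot u)\in C_T^k\scrH^0$, $f(u)\dot u\in C_T^k\ltom$, and the identification of $w=\p_t^k u$ as a weak solution make sense. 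Once this mis-citation is repaired, the remainder of your argument is sound and matches the paper --- indeed you are more careful than the paper in checking that $\p_t^k u$ genuinely satisfies Definition \ref{def:weak} rather than a merely formal identity.

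A secondary remark concerns signs. You take $G=-\p_t^k[f(u)\dot u]$, which is consistent with the literal definition $\ddot\phi-\phi_{xx}=F$ of $\lp F,\phi_0,\phi_1\rp$, whereas the paper writes $w$ as a solution of $\lp \p_t^k[f(u)\dot u], w(0),\dot w(0)\rp$ with no minus sign, consistent with the convention it actually uses elsewhere (the forcing entering with a plus sign, as in its definition of a weak solution to \system{} and in \eqref{energy-identity:0}). These two conventions in the paper are not mutually consistent, so your bookkeeping is defensible; but be aware that if you substitute $G$ into \eqref{energy-identity:0} \emph{as literally printed}, the damping integral comes out with the opposite sign from \eqref{k-energy-identity}, so your final ``rearrangement'' step should be stated using the energy identity written in the convention matching your definition of $G$.
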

\begin{proof}
 Proposition \ref{prop:F-regularity} confirms that $F$ and $F=f(u)\dot{u}$ have $n$-th order regularity dependence. Invoke Proposition \ref{prop:regular-differentiable} to conclude that $u\in S_T^n$. It certainly implies that $(u,\dot{u})\in C^k\scrH^0$ for $k\leq n$. And by the $n$-th order regularity dependence of $(u,F)$ we also have  $F\in C_T^k \ltom$. 

Define
\[
 w = \p_t^k u
\]
then $w$ is a weak solution to $\lp \p_t^k F, w(0), \dot{w}(0) \rp$ with $\p_t^k F\in C_T\ltom$ and $(w(0),\dot{w}(0))\in \scrH^0$.  Now call upon the  energy identity \eqref{energy-identity:0} for $w$ to obtain \eqref{k-energy-identity} for $u$.
\end{proof}

\subsubsection{Finishing the proof: global existence and regularity}

Let $u$ be a weak solution on interval $[0,T]$. Because  $F \cdot \dot{u} \dfn f(u)\dot{u}^2$ is non-negative by assumption \eqref{as:f} on $f$, then in the case
 $n=0$ by Lemma \ref{lem:energy-regular} we have
\begin{equation}\label{bound-energy0}
\texthalf\|(u,\dot{u})\|_{C_T\scrH^0}^2
\leq \sup_{t\in [0,T]} E_u(t) \leq  E_u(0)\,.
\end{equation}
Since the bound is independent of $T\in [0,T_{\max})$, then by Proposition \ref{prop:extension} $u$ extends globally to $t\geq 0$.

Arguing by induction, suppose that $(u,\dot{u})$ is a global solution that is regular of order $n$, and at the same time is \emph{local} regular of order $n+1$ on a maximal interval $[0,T_{\max})$. We want to show that it is also global of order $n+1$.

\begin{example}
It helps to preview the result on the example of $f(s)=s^2$ and by extending from weak to regular of order $1$ solutions.
Suppose $u$ is a global weak solution that is, in addition,  regular of order $1 $ on maximal interval $[0,T_{\max})$. For such a solution by Proposition \ref{lem:energy-regular} we have the ``$1$-st order"  energy identity
\[
 E^{(1)}_u(T)  + \intT \p_t [f(u)\dot{u}] \ddot{u} = E^{(1)}_u(0)\,.
\]
\[
 E^{(1)}_u(T)  + \intT (2u \dot{u}^2  + u^2 \ddot{u})\ddot{u} = E^{(1)}_u(0)\,.
\]
We can bound $C_T L^\infty(\Om)$ norms of $u$ and $\dot{u}$ in terms of the finite energy $E_u(t)$.
The latter is bounded by constant $E_u(0)$  (but, in fact, any continuous on $\bbR_+$ upper bound on $E_u(t)$ would do, which is how the general argument works).  In turn, the term $|\ddot{u}|_{_0}^2$ is a part of the first-order energy. So we have
\[
 E^{(1)}(T) \leq E_u^{(1)}(0) + C(E^{(0)}_u(0)) \intT E^{(1)}_u(t) dt
\]
From here, Gronwall's estimate gives us an asymptotically growing continuous upper bound on  $E^{(1)}_u(t)$.
Then Proposition \ref{prop:extension} ensures that $u$ is regular of order $1$ globally, that is $T_{\max}=\infty$.
\end{example}

Now onto the actual proof of global existence. By the $n$-th order global regularity there is a continuous monotone increasing function $\psi_n:\bbR_+ \to \bbR_+$
\[
\| (u,\dot{u})\|_{S_t^n} \leq \psi_n(t) \txtforall t\geq 0\,.
\]
By $(n+1)$'st order regularity and via  Lemma \ref{lem:energy-regular} for any $T\leq [0,T_{\max})$ we also have
\[
 E^{(n+1)}_u(T) + \int_0^T (\p_t^{n+1} F, \p_t^{n+1}\dot{u} )_0  = E_{u}^{(n+1)}(0)\,.
 \]
Use estimate \eqref{p_t-f-bound} of Proposition \ref{prop:F-regularity} to claim that there is a polynomial $\cP(s)$ such that for each $t$
\[
 |\p_{t}^{n+1} F(t)|_{_0} \leq \cP(\psi_n(t)) (1+|\p^{n+1}_t \dot{u}(t)|_{_0})\,.
\]
Then $ |\p_{t}^{n+1} F(t)|_{_0} |\p_t^{n+1}\dot{u}|_{_0} \leq  
\cP(\psi_n(t))(1+ |\p_t^{n+1}\dot{u}|_{_0}+|\p_t^{n+1}\dot{u}|_{_0}^2)
\leq 2\cP(\psi_n(t))(1+ |\p_t^{n+1}\dot{u}|_{_0}^2)$. In particular, 
\[
 E^{(n+1)}_u(T)   \leq E_{u}^{(n+1)}(0) + 4\int_0^T \cP(\psi_n(t)) (1+E_{u}^{(n+1)}(t)) dt\,.
\]
Gronwall's inequality now provides
\[
 E_u^{(n+1)}(T) \leq \psi_{n+1}(T)\dfn \left(E_{u}^{(n+1)}(0)+ \intT 4\cP(\psi_n(t))dt \right) \exp\left(  \int_0^T 4\cP(\psi_n(t))dt\right)\; \text{ for all }\;  T< T_{\max}\,.
\]
By the assumption on $\psi_n$, the newly defined mapping $\psi_{n+1}$ is  continuous  on  $\bbR_+$. Then Proposition  \ref{prop:extension} ensures that the solution $u$ is global, regular of order $n+1$, which completes the proof by induction.

\bigskip

This step also completes the proof of Theorem \ref{thm:well-posed}. The additional regularity $u\in S_T^n$ follows if we first use  Proposition \ref{prop:F-regularity}, which establishes $n$-th order dependence of $u$ and $f(u)\dot{u}$, and then invoke Proposition \ref{prop:regular-differentiable} which translates $n$-th order regularity into $S_T^n$.

\section{Lack of uniform stability}\label{sec:nonuniform}

This section is devoted to the proof of  Theorem \ref{thm:unstable}. The heart of the argument is that the energy decay slows down with the frequency of solutions, independently of their total finite energy. 

Let $k\in \bbN$ and  consider initial condition of the form
\begin{equation}\label{u0-sine}
 u^{(k)}_0(0,x) = \frac{2}{k\pi} \sin(k\pi x),\qquad  \p_t u^{(k)}_1(0,x) = 0\,.
\end{equation}
Note that $u^{(k)}_0\in \dom(A^r)$ for any $r\in \bbR$.
By direct calculation we have
\begin{equation}\label{l2-norm-u0}
   E_{u^{(k)}}(0) = \texthalf\|\p_x u^{(k)}_0\|^2  + \texthalf \|u^{(k)}_t(0)\|^2 =\texthalf\|\p_x u^{(k)}_0\|^2  = 1 \txtand \|u^{(k)}_0\|= \frac{\sqrt{2}}{k \pi} \,.
\end{equation}
Hence for such initial data, any constant that may be estimated in terms of  $\|(u^{(k)}_0,u^{(k)}_1)\|_{\scrH^0}$ is bounded uniformly in $k $.

\subsection{``Primitive" problem and decay of lower-order norms}
We continue working with the initial data  $(u^{(k)}_0,0)$ as in \eqref{u0-sine}. Pick the anti-derivative through the origin of $f$  (as in \eqref{as:f})
\[
\tl{f}(s) \dfn \frac{s^{2m+1}}{2m+1}\,.
\]
Due to  $|u^{(k)}_0|_{_1}^{2m}$ (hence $\|u^{(k)}_0\|^{2m}_{C(\cl{\Om})}$) being bounded independently of $k $, we have for some constant $C$ \textbf{independent of $k$ in \eqref{u0-sine}} the estimate
\[
| \tl{f}(u^{(k)}_0)|_{_0} \leq  C |u^{(k)}_0|_{_0}\,.
\]

Let function $\Phi^{(k)}$ solve  the linear elliptic boundary value problem
\begin{equation}\label{elliptic}
\left\{
\begin{array}{rcll}
  \p_{xx} \Phi^{(k)}  &=& \tl{f}(u^{(k)}_0) \\
  \Phi^{(k)}(0)&=& \Phi^{(k)}(1) = 0\,.
\end{array}
\right.
\end{equation}
It satisfies the standard elliptic estimate:
\begin{equation}\label{elliptic-phi0}
 |\Phi^{(k)}|_{_1} \leq C' |\tl{f}(u^{(k)}_0)|_{_0} \leq C |u^{(k)}_0|_{_0}\,
 \end{equation}
 for $C'$ and $C$ \textbf{independent of $k$}. We do have $\tl{f}(u^{(k)}_0)\in \ltom$ because $u_0^{(k)}\in \V$. In fact, $u^{(k)}_0$ is an eigenfunction, and it is easy to show  (as in Proposition \ref{prop:F-regularity}) that
\[
 \tl{f}(u^{(k)}_0) \in \dom(A) \cap C^{\infty}(\Om)\,.
\]
We then  have
 \[
 \Phi^{(k)} \in \dom(A^2) \subset W^{4,2}(\Om)\,.
 \]

With this function $\Phi^{(k)}$ at hand, consider the nonlinear ``primitive" problem
\begin{equation}\label{sys:primitive}
  \ddot{\phi}^{(k)} -\phi^{(k)}_{xx} +\tl{f}(\dot{\phi}^{(k)}) = 0 \txtfor x\in \Om, \; t>0
\end{equation}
with homogeneous Dirichlet boundary data
\begin{equation}\label{sys:primitive:bc}
 \phi^{(k)}(t,0)\equiv \phi^{(k)}(t,1)\equiv 0 \txtfor t>0
\end{equation}
and initial condition
\begin{equation}\label{sys:primitive:ic}
 \phi^{(k)}(0) = \Phi^{(k)}  \txtand  \dot{\phi}^{(k)}(0) = u_0\,.
\end{equation}

Note the following  properties of the system \eqref{sys:primitive}--\eqref{sys:primitive:ic}:
\begin{enumerate}[i.)]

\item Because $s\mapsto\tl{f}(s)$ is  continuous, monotone increasing vanishing at $0$, then \eqref{sys:primitive} is a semilinear wave problem with monotone damping. The data $(\Phi^{(k)}, u^{(k)}_0)$ come from the domain of the associated nonlinear generator; in this 1D setting  it coincides with the domain  $\scrH^1$ of the corresponding linear group. This well-posedness result is based on monotone operator theory \cite[Thm.  3.7, p. 306]{b:barbu:93}.

	In particular, we have for any $T>0$
	\[
	\phi^{(k)} \in L^\infty_T \dom(A) \cap C^1_T \dom(A^{1/2})\txtand \ddot{\phi}^{(k)}\in L^\infty_T \ltom\,.
	\]
Moreover, given the 1D embeddings it follows that
	\[
	\tl{f}(\dot{\phi}^{(k)}) \in C_T^1 \ltom \cap C_T \dom(A^{1/2})\,.
	\]
	So $\phi^{(k)}$ satisfies \eqref{sys:primitive} and has  $(\phi(0),\dot{\phi}(0))\in \dom(A^2)\times \dom(A^{3/2})$. By Proposition \ref{prop:linear-problem}  we have
	\[
	\phi_{xx}^{(k)},\; \ddot{\phi}^{(k)} \in C_T \ltom\,.
	\]

\item   The velocity $v^{(k)}(t,x)\dfn  \dot{\phi}^{(k)}(t,x)$  satisfies $(v^{(k)},\dot{v}^{(k)}) \in L^2_T \scrH^0$ and is a  weak  solution to
	\begin{equation}\label{v-problem}
	  \ddot{v^{(k)}}- v^{(k)}_{xx} + f(v^{(k)}) v^{(k)}_t = 0\,.
	\end{equation}
 In addition,
\[
 v^{(k)}(0) = \dot{\phi}^{(k)}(0) = u^{(k)}_0 \in \ltom  \qquad(\text{actually, in any }\dom(A^r))
\]
and because $\phi_{xx}^{(k)}$, $\ddot{\phi}^{(k)}$ are continuous with values in $\ltom$, then
\begin{eqnarray*}
\dot{v}^{(k)}(0) &= & \ddot{\phi}^{(k)}(0) \stackrel{\eqref{sys:primitive}}{=}  \phi^{(k)}_{xx}(0) - \tl{f}(\dot{\phi}^{(k)}) = \p_{x}^2\Phi^{(k)}  - \tl{f}(u^{(k)}_0) =0
\end{eqnarray*}
according to the way $\Phi_0$ was constructed in \eqref{elliptic}. \emph{Thus the primitive problem is the velocity potential for our original problem. Namely the time derivative $\dot{\phi}^{(k)}$ is the solution $u^{(k)}$ to our original degenerately damped system with initial data $(u_0^{(k)}, 0)$.}

\item Solutions to \eqref{sys:primitive}--\eqref{sys:primitive:ic} decay uniformly to zero at the rate that can be estimated explicitly in terms of the exponent $2m$ of $f$. This is a fundamental example of a dissipative problem with full interior damping. The decay rate can be assessed using, for example, the ODE characterization of \cite{las-tat:93} (see \cite[Coro. 1,  p. 1770]{las-tou:06} for more details; in particular function $h(s)$ there must satisfy $h(s^{2m+2}) \geq s^2$). We get, as $t\to \infty$ that
\[
 E_{\phi^{(k)}}(t) \sim \left(\frac{1}{t}\right)^{1/m}\,.
\]
It follows that the (lower-order) norm $|u^{(k)}(t)|_{_0}^2 =
|\dot{\phi}^{(k)}(t)|_{_0}^2$ decays to zero as $t\to \infty$ with the decay rate uniform with respect to the finite energy  $E_{u^{(k)}}(0)$. By interpolation between $\ltom$ and $H^1(\Om)$ we conclude uniform a decay (but at slower rates, where $1/m$ is modified by the interpolation exponent) for every norm $\|u^{(k)}(t)\|_{W^{s,2}(\Om)}$ with $s\in (0,1)$. 
\begin{remark}
Note that uniform stability of the original system would have required us to include the case $s=1$, which as we are about to show cannot happen.
\end{remark}

\end{enumerate}
\textbf{Summarizing:}  $v^{(k)} = \dot{\phi}^{(k)}$ is precisely the solution $u^{(k)}$ to $\lp f(u^{(k)})\dot{u}^{(k)},  u^{(k)}_0, 0\rp$. Any fractional $W^{s,2}(\Om)$ Sobolev norm of $u^{(k)}$ for $s\in (0,1)$ decays uniformly with respect to $E_{u^{(k)}}(0)$.
In addition, we have the estimate
\begin{equation}\label{L^2-est}
  |u^{(k)}(t)|_{_0}^2 = |\dot{\phi}^{(k)}(t)|_{_0}^2 \leq 2 E_{\phi^{(k)}}(t)
 \leq 2 E_{\phi^{(k)}}(0)  = |\p_x \Phi^{(k)}|_{_0}^2 + |u^{(k)}_0|_{_0}^2 \stackrel{\eqref{elliptic-phi0}}{\leq}  C_2 |u^{(k)}_0|_{_0}^2
\end{equation}
for $C_2$ independent of $u_0^{(k)}$, hence \textbf{independent of $k$} in \eqref{u0-sine}. In particular, the lower order norm $|u^{(k)}(t)|_{_0}$ of the solution to our original problem $\lp f(u)\dot{u}^{(k)}, u^{(k)}_0, u^{(k)}_1 \rp$ is controlled by its initial lower-order norm  $|u^{(k)}_0|_{_0}$ \textbf{independently of $k$}.

Now we are going to use the smallness of the $L^2(\Om)$ norm of $u^{(k)}(t)$ to show that its $\dom(A^{1/2})$ norm cannot decay too fast.

\subsection{Comparison with a conservative problem}
Let $w^{(k)}$ be the unique solution to \emph{linear homogeneous} problem $\lp 0, u^{(k)}_0, 0\rp$. From the energy identity \eqref{energy-identity:0} and by the choice of $u^{(k)}_0$ in \eqref{u0-sine} we get
\[
 E_{w^{(k)}}(t) \equiv 1 = E_{u^{(k)}}(0) \,.
\]
Define 
\[
 z^{(k)} = u^{(k)}-w^{(k)}\,
\]
then $z^{(k)}$ solves   $\lp f(u^{(k)})\dot{u}^{(k)}, 0, 0\rp$. The energy identity \eqref{energy-identity:0} for $z$ gives
\[
  E_{z^{(k)}}(t) = \underbrace{E_{z^{(k)}}(0)}_{=0} - \intt (  f(u^{(k)})\dot{u}^{(k)},\dot{z}^{(k)})_{_0} \quad\leq\quad \intt \|f(u^{(k)})\|_{L^\infty(\Om)} |\dot{u}^{(k)}|_{_0}|\dot{z}^{(k)}|_{_0}
\]

Note that 
\begin{itemize}
\item  $|\dot{u}^{(k)}|_{_0}\cdot |\dot{z}^{(k)}|_{_0} \leq C(\sqrt{E_{u^{(k)}}(0) E_{z^{(k)}}(0)}) \leq 2C$  \quad (since $E_{u^{(k)}}(0) = 1$)  \textbf{independently of $k$} in \eqref{u0-sine}.

\item  Given $f$ as in \eqref{as:f} we have $\|f(u^{(k)})\|_{L^\infty(\Om)}\lesssim  \|u^{(k)}\|_{L^\infty(\Om)}^{2m}$. Via 1-dimensional embeddings  and interpolation for $\eps s <1/2$
\[
 \|u^{(k)}\|_{L^\infty(\Om)}
 \lesssim
  \|u^{(k)}\|_{W^{1-\eps, 2}(\Om)}\lesssim |u^{(k)}|_{_0}^{\eps}\cdot |u^{(k)}|_{_1}^{1-\eps}
\]
so
\[
\begin{split}
 \|u^{(k)}\|_{L^\infty(\Om)}^{2m} \leq & \|u^{(k)}\|_{L^\infty(\Om)}^{2m-1}\cdot |u^{(k)}|_{_0}^{\eps} \cdot |u^{(k)}|_{_1}^{1-\eps} \;\leq \;  C(E_{u^{(k)}}(0),\eps) \, |u^{(k)}|_{_0}^{\eps}
 \end{split}
\]
Because $E_u(0) =1$ independently of $k$ in \eqref{u0-sine}, then by \eqref{L^2-est} we get for any $t\geq 0$
\[
 \|f(u^{(k)}(t))\|_{L^\infty(\Om)} \lesssim \|u^{(k)}(t)\|^{2m}_{L^\infty(\Om)} \leq  C_3 |u^{(k)}_0|_{_0}^\eps.
\]
for $C_3$ \textbf{independent of $k$} in \eqref{u0-sine}.
At this point we finally expand the definition of $u_0$  to get (see \eqref{l2-norm-u0})
\[
\| f(u^{(k)}(t))\|_{L^\infty(\Om)} \leq C_{3,\eps} \frac{1}{k^\eps}\txtforany \eps <1/2 \txtand t\geq 0\,.
\]
\end{itemize}

Plugging these observations into the estimate for $E_{z^{(k)}}(t)$ we obtain: 
\begin{lemma}\label{lem:decay}
 For $u^{(k)}_0(x) \dfn 2 (k\pi)^{-1}\sin(k\pi x)$ let $u^{(k)}$ denote the weak solution on $[0,T]$ to the problem $\lp f(u^{(k)})\dot{u}^{(k)}, u^{(k)}_0, 0\rp$ and $w^{(k)}$ be the solution to linear homogeneous problem $\lp 0, u^{(k)}_0, 0\rp$. Then the difference $z^{(k)}-w^{(k)}$ solves $\lp f(u^{(k)})\dot{u}^{(k)}, 0, 0\rp$ and for any $\eps <1/2$ it satisfies 
\[
 E_{z^{(k)}}(t) \leq  C_\eps T \frac{1}{k^\eps} \txtforall t\in[0,T]\,,
\]
with $C_\eps$ \textbf{independent of} $k$.\qed
\end{lemma}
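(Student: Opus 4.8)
The plan is to derive the bound directly from the linear energy identity \eqref{energy-identity:0} applied to $z^{(k)}$, exploiting the uniform-in-$k$ estimates already assembled above. Since $z^{(k)} = u^{(k)} - w^{(k)}$ solves $\lp f(u^{(k)})\dot{u}^{(k)}, 0, 0\rp$ with vanishing initial data, its initial energy is zero, so \eqref{energy-identity:0} reads
\[
E_{z^{(k)}}(t) = -\intt \big(f(u^{(k)})\dot{u}^{(k)}, \dot{z}^{(k)}\big)_{_0}\,ds\,.
\]
First I would dominate the integrand by the triple product $\|f(u^{(k)})\|_{L^\infty(\Om)}\,|\dot{u}^{(k)}|_{_0}\,|\dot{z}^{(k)}|_{_0}$ via Cauchy--Schwarz on the inner product followed by the $L^\infty$--$L^2$ pairing, reducing the task to controlling these three factors uniformly in $k$ and $t$.

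The two kinetic factors are $O(1)$ by energy monotonicity alone. Since $E_{u^{(k)}}(t)$ is non-increasing, $|\dot{u}^{(k)}|_{_0}\leq \sqrt{2E_{u^{(k)}}(0)}=\sqrt{2}$; then by the triangle inequality $|\dot{z}^{(k)}|_{_0}\leq |\dot{u}^{(k)}|_{_0}+|\dot{w}^{(k)}|_{_0}\leq 2\sqrt{2}$, using the conservation $E_{w^{(k)}}(t)\equiv 1$. Both bounds are independent of $k$ precisely because $E_{u^{(k)}}(0)=1$ for every $k$, as recorded in \eqref{l2-norm-u0}.

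The decisive factor is $\|f(u^{(k)})\|_{L^\infty(\Om)}$, and here the key input is the decay analysis of the primitive problem. Because $u^{(k)} = \dot{\phi}^{(k)}$ is the velocity potential of the genuinely (interior) damped wave \eqref{sys:primitive}, the energy $E_{\phi^{(k)}}(t)$ is controlled by $E_{\phi^{(k)}}(0)\lesssim |u_0^{(k)}|_{_0}^2$ through \eqref{L^2-est}, giving the uniform-in-time smallness $|u^{(k)}(t)|_{_0}\lesssim |u_0^{(k)}|_{_0}\sim k^{-1}$. Interpolating this small $L^2$ norm against the uniformly bounded finite energy of $u^{(k)}$ through the one-dimensional embedding $W^{1-\eps,2}(\Om)\into L^\infty(\Om)$ yields, for each $\eps<1/2$,
\[
\|f(u^{(k)}(t))\|_{L^\infty(\Om)}\lesssim \|u^{(k)}(t)\|_{L^\infty(\Om)}^{2m}\leq C_{3,\eps}\,|u_0^{(k)}|_{_0}^{\eps}\lesssim k^{-\eps}
\]
uniformly in $t\geq 0$. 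Substituting the $O(1)$ kinetic bound and this $k^{-\eps}$ decay into the displayed energy identity and integrating over $[0,t]\subseteq[0,T]$ then produces $E_{z^{(k)}}(t)\leq C_\eps T\,k^{-\eps}$, as claimed.

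The hard part, already resolved in the preceding discussion, is establishing that $\|f(u^{(k)})\|_{L^\infty(\Om)}$ is small uniformly in time rather than merely at $t=0$: the finite energy of $u^{(k)}$ itself does not decay, so the smallness must be imported from the monotonically decaying energy of the primitive problem via \eqref{L^2-est}, and the interpolation step then transfers it to the supremum norm at the price of weakening the exponent from $1$ to any $\eps<1/2$.
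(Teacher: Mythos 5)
Your proposal is correct and follows essentially the same route as the paper: the energy identity \eqref{energy-identity:0} for $z^{(k)}=u^{(k)}-w^{(k)}$ with zero data, Cauchy--Schwarz plus the $L^\infty$--$L^2$ splitting of the damping term, uniform-in-$k$ kinetic bounds from $E_{u^{(k)}}(0)=1$, and, as the decisive ingredient, the uniform-in-time smallness $|u^{(k)}(t)|_{_0}\lesssim k^{-1}$ imported from the primitive (velocity-potential) problem via \eqref{L^2-est}, transferred to $\|f(u^{(k)})\|_{L^\infty(\Om)}\lesssim k^{-\eps}$ by the $W^{1-\eps,2}(\Om)\into L^\infty(\Om)$ interpolation. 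Your triangle-inequality treatment of $|\dot{z}^{(k)}|_{_0}$ is in fact a cleaner rendering of the corresponding (slightly garbled) line in the paper, but the argument is the same.
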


\subsection{Ruling out uniform stability}
At this point for brevity let us suppress the superscript ``$(k)$" stemming from the choice of the parameter in the definition of initial data. Let $u,w,z$ be as in Lemma \ref{lem:decay}. Suppose for a moment that  $E_z(t)\leq \del$ for some $t$. Then via $ E_w(t) \equiv 1 = E_u(0) $ we have
\begin{equation}\label{E-u-lower-bound}
\begin{split}
E_u(t) = E_{w+z}(t)=& \;\texthalf |w_t(t) + z_t(t) |_{_0}^2  + \texthalf | w_x(t) + z_x(t)|_{_0}^2 \\
=&\;
 E_w(t) + E_z(t) + ( w_t(t), z_t (t))_{_0} + ( w_x(t),  z_x(t) )_{_0}\\ \geq & \; E_w(t) + E_z(t) - 4\sqrt{E_w(t) E_z(t)}
 \geq  1    - 4 \sqrt{\del}\,.
 \end{split}
\end{equation}
To apply this estimate, pick any $\overline{T}>0$ and  find $\del>0$  such that $1-4\sqrt{\del} > 1/2$ (e.g., if $\del < 1/64$). Fix $\eps <1/2$, then there is $k=k(\delta)$ large enough so that  for initial condition $(u_0=2(\pi k)^{-1}\sin(k\pi x), 0)$ yields a solution whose energy satisfies
\[
 E_z(t) \leq C_\eps  \frac{\cl{T}}{k^{\eps}} < \del \txtfor t\in[0,\cl{T}]\,,
\]
for $C_{\eps}$ as in Lemma \ref{lem:decay}. Consequently, by \eqref{E-u-lower-bound},
\[
E_u(t) \geq \frac{1}{2} \txtforall  t\in [0,\cl{T}]\,.
\]
However, the initial condition $(u_0, 0)$  had energy $1$ (again, independently of $k$). Hence the family of initial conditions
\begin{equation}\label{initial-data-frequency}
 \left\{\left( u_0^{(k)}\dfn\frac{2}{k\pi} \sin(k\pi x),\; u_1^{(k)}\dfn 0\right) :  k\in \bbN \right\}
\end{equation}
with the associated solutions $u^{(k)}$, resides in a bounded ball (of radius $\sqrt{2E_{u^{(k)}}(0)}=\sqrt{2}$) in $\scrH^0$, yet the corresponding solutions  do not decay to zero uniformly in the topology of $\scrH^0$.

Thus the associated dynamical system on $\scrH^0$ is not uniformly stable. This argument demonstrates Theorem \ref{thm:unstable} for $\bar{r}=1$ and $r=\half$. The general case follows merely  by attaching a factor of $\sqrt{r}$ to $u^{(k)}_0$ and choosing a potentially  smaller $\delta$ in the last step of the argument.

\subsection{Monotonic strong decay of the energy}

For a weak solution $u$ of $\lp f(u)\dot{u}, u_0, u_1\rp$ with $(u_0,u_1)\in \scrH^0$ the functional
\[
 t\mapsto E_u(t)
\]
is non-increasing. 
We cannot presently claim whether solutions decay to $0$ strongly, however,  it is possible to show that the energy $E_u(t)$ is \emph{strictly}  monotonically decreasing for non-trivial solutions.

Consider a weak solution $u$ on $[0,T]$ and suppose  $E_u(t)\equiv \const$ on $(a,b)\subset [0,T]$, then from the energy identity \eqref{energy-identity:0}  follows that
\[
 \int_a^b (u(t)^{2m}, \dot{u}^2(t))_{_0}  dt=0
\]
Thus $u^m \dot{u} \equiv 0$ a.e. in $(a,b)\times \Om$. Since it is equivalent to $\frac{1}{m+1}\p_t(u^{m+1})=0$, then we conclude that $u^{m+1}(t_1)=u^{m+1}(t_2)$ in $\ltom$ for a.e. $t_1,t_2$:
\[
  u^{m+1}  = \const \txtin \ltom \quad \text{ a.e. } \quad t\in (a,b).
\]
Moreover, since $u\in C_T \V \into C ([0,T]\times \cl{\Om})$, then we have   $u(t, x) = \pm u_0(x)$ for   every $(t,x)\in (a,b)\times \Om$. By the continuity of the solution this  is only possible if $u \equiv u_0$ for $(t,x)\in (a,b)\times \Om$. Then $\dot{u}\equiv 0$ and we arrive at an equilibrium solution which has to be trivial. This observation completes the proof of Theorem \ref{thm:unstable}.

\section{Numerical results}\label{sec:numerics}

The strategy for the proof of instability was largely prompted by numerical observations described below.

\subsection{Outline of the numerical approach}
The  numerical implementation presented here treats the case $m=1$ of \eqref{as:f}:
\[
  \ddot{u} - u_{xx}  +  u^2  \dot{u} = 0
\]
with
\[
 u(t,0) \equiv u(t,1) \equiv 0
\]
and given initial data 
\[
 u(0,x) = u_0(x) \txtand  \dot{u}(0,x) = u_1(x)\,.
\]
Solution was discretized in space via a Ritz-Galerkin method. The dynamic problem could be analyzed explicitly using a discretization in time  and a Runge-Kutta scheme, though, rigorous justification of convergence becomes more delicate. Another approach is to approximate the successive approximations   of Theorem \ref{thm:local} which, when exact, are guaranteed to converge, at least over small time intervals. The iterates correspond to \emph{linear} inhomogeneous PDE problems that are resolved using a hybrid scheme:
\begin{enumerate}[(i)]
	\item for relatively short times find solutions using an approximation of semi-discrete Ritz-Galerkin method by discretizing time-integrals in the variation of parameter formula. If the error in numerical integration is small, then this approach enjoys an explicit convergence estimate (for smooth solutions and over finite time intervals) essentially proportional to the space discretization parameter $h$.

	\item for larger times, collect the last $k$-points of the semi-discrete approximation and resolve the rest of the iteration using a multi-step method (Adams-Bashforth).
\end{enumerate} 

 Thus, we begin  with some initial guess $(u_{[0]}, \dot{u}_{[0]})$ and proceed to solve inhomogeneous linear problems
\begin{equation}\label{k-th-problem}
\ddot{u}_{[k]} - \p_{xx} u_{[k]}  =  F_{[k-1]}
\end{equation}
on the space $\scrH^0=\V \times \ltom$ with the  forcing term from the preceding iteration
\[
 F_{[k-1]} = - u_{[k-1]}^2 \dot{u}_{[k-1]}\,.
\]
The constants in the estimates \eqref{Lambda-contr} and \eqref{T-invariance} could potentially be determined explicitly in this case  (by following the proof with  specific $\al$ and $m$ in the definition \eqref{as:f} of $f$)  which would yield an explicit bound on the Lipschitz constant ``$\gam$" of the contraction mapping in terms of $T$. In turn, given this constant $\gam>0$  if
\[
  \|u_{[k]}-u_{[k-1]}\|_{C_T \scrH^0} < \eps
\]
then the absolute error between $k$-th iteration and the true solution is no more than $\eps \gam/(1-\gam)$.

If we use a Ritz-Galerkin scheme with element size $h$ to find an approximate solution $u_{[k,h]}$ to linear inhomogeneous problem \eqref{k-th-problem}, then for $h$ small, e.g. see \cite[Thm. 13.1, p. 202]{b:lar-tho}, and for simplicity taking the initial conditions to be the more accurate Ritz projections of the initial data, we get
\[
 \|u_{[k]} - u_{[k,h]} \|_{C_T\scrH^0} \leq     C h  \intT |\ddot{u}_{[k]}(s)|_2 ds\,.
\]
This estimate of course requires sufficiently regular solutions.
As Theorem \ref{thm:well-posed} and Example \ref{ex:square} show, in order to have the $L^2 W^{2,2}(\Om)$  regularity on $\ddot{u}_{[k]}$  it suffices to have initial data in the space
\[
 (u(0), \dot{u}(0)) \in \scrH^2 = \dom(A^{2})\times \dom(A^{3/2})\,.
\]
For example, the demonstrated numerical results below use displacement and velocity proportional to the eigenfunctions of $A$, which are smooth.

\subsection{Specifics of the implementation}\label{particulars}

Consider an equipartitioned mesh of subintervals of length $h$ and the standard  piecewise linear nodal basis $\{ \phi^h_i\}$, with $i=1,\ldots N\dfn (h^{-1}-1)$. Let  $\bbA_h$ denote the restriction of the linear evolution generator $\bbA$ to the subspace $\scrH_h^0$ of $\scrH$  spanned by $\{ \psi^{(h)}_{ij} = (\phi_i^{(h)}, \phi_j^{(h)})\}$. By $\cR^{1}_h$ denote the elliptic Ritz projection on the subspace of $\V$ and  let $\cR^{0}_h$ stand for the corresponding $L^2(\Om)$ projection.

Given a candidate approximation 
\[
y_{[k-1,h]} = \vecn{ u_{[k-1,h]}\\ \dot{u}_{[k-1,h]}}
\]
we compute the forcing 
\[
f_{[k-1,h]}(t,x) = u_{[k-1,h]}^2(t,x)\dot{u}_{[k-1,h]}(t,x)
\]
The coefficient vector $\bff_{[k,h]}$ of the projection of $\cR_h^0 f_{[k,h]}$ is obtained in terms of the coefficients
$\ds C_{pqrs} = \intOm \phi^h_p(x) \phi^h_q(x) \phi^h_r(x) \phi^h_s(x)dx $  (which  for this choice of basis functions $\phi^h_i$ form a very sparse tensor with only 3 distinct values).
The initial guess used to calculate $f_{[0,h]}$ is the constant solution :
\[
 f_{[0,h]}(t,x) =  (\cR_h^{1} u_0(x))^2(\cR_h^{0} u_1(x)) \txtforall t\geq 0.
\]
Let $\bfy_0$ denote the projection of the initial data  $(\cR^1_h u_0, \cR_h^0 u_1)$. We obtain a semi-discrete approximation of the original system \system{} for unknown coefficient vector $\bfy_{[k,h]}$:
\[
\bfy_{[k,h]}'  = -\bbA_h \bfy_{[k,h]} +
\bff_{[k-1,h]},\qquad \bfy(0) = \bfy_0\,.
\]
For relatively short times we can invoke
\[
\bfy_{k,h}(t) =  e^{-t \bbA_h} \bfy_0 + I_t \txtwith I_t \dfn \int_0^t e^{-\bbA_h (t-s)}\bff_{[k-1,h]}(s)ds
\]

which is in turn discretized over time scale $\bfT=(t_1,t_2,\ldots, t_N)$  with $t_{i+1}-t_i = \del$. According to
\[
I_t  =
 e^{-\bbA_h(t-\bar{t})} I_{\bar{t}} + 
 \int_{\bar{t}}^{t} e^{-\bbA_h (t-s)} \bff_{[k-1,h]}(s)ds\,.
\]
at each step only the integral over $[\bar{t},t]$ needs to be computed. For this purpose only several values of the matrix exponentials $e^{-\bbA_h(t-s)}$ are needed in order to apply the Newton-Cotes rule on sub-interval $[\bar{t}, t]$, specifically:
\[
e^{j\cdot \delta \bbA_h} \txtfor j=1,2,\ldots,m-1
\]
where $m$ is number of points used for Newton-Cotes formula (e.g., Boole's  or Simpson's $3/8$th).  These $m-1$ matrices need to be computed just once and only depend on the time-step, but not the total number of these steps.

In turn, the vectors  $e^{-t_k \bbA_h} \bfy_0$ have to be found for each $t_k$. But since $\bfy_0$ is fixed, these  can be more accurately determined using scaling and truncated Taylor series approximation \cite{alm-hih:11:SIAMJSC}.

For simulations over larger time-scales we can use the last few values:
\[
 (t_{N-p}, \bfy_{k,h}(t_{N-p})), \cdots,  (t_N, \bfy_{k,h}(t_N))
\]
to initialize  a linear $p$-step method, e.g., $5$-step 
Adams-Bashforth to efficiently obtain solution on the interval
 $[t_N, t_{\text{final}}]$.

\subsection{Pointwise Runge-Kutta solutions for particular initial data}\label{sec:pointwise-rk}

As before, let $E_k$ be the eigenfunction $\sqrt{2} \sin(\pi k x)$ for $A$ with eigenvalue $\lam_k = \pi^2 k^2$. Then for initial data
\begin{equation}\label{ic-eigenfunctions}
 u_0(x) = c_0 E_k(x) \txtand u_1 = c_1 E_k(x)
\end{equation}
for constants $c_0$, $c_1$, the solution of \system{} can be reduced to a dissipative ODE using the ansatz
\begin{equation}\label{ansatz}
 u(t,x) = \phi(t) E_k(x)\,.
\end{equation}
Plugging it into \eqref{wave}  equation yields
\[
 \phi'' E_k + \lam_k \phi E_k +  E_k^2 \phi^2 \phi' = 0
\]
This identity would be implied if for each $x\in \Om$ function  $\phi$ solves the 2nd-order nonlinear ODE
\[
\phi''  + \lam_k \phi + E_k(x) \phi^2 \phi' = 0\,.
\]
\[
 \phi(0)=c_1\txtand \phi'(0)=c_2.
\]
It corresponds to a first-order nonlinear system:
\begin{equation}\label{ode-ansatz}
\Psi' = \bbF_k(\Psi, x), \qquad \bbF_k\left(\vecn{\al\\\bet},x\right) \dfn \vecn{\bet\\ -\lam_k \al - E_k(x) \al^2 \bet},\quad \Psi(0) =\vecn{c_0\\c_1}\,.
\end{equation}
Function $\bbF_k$ is smooth with respect to the components of $\Psi$ and to variable  $x$, which now acts as a parameter. This ODE system has global differentiable solutions, moreover since $E_k(x)$ is smooth, in fact, analytic in $x$ then local solutions are differentiable with respect to $x$ \cite[Thm. 3.1, p. 95]{b:hartman:02}.

Because the initial data \eqref{ic-eigenfunctions} is smooth then by Theorem \ref{thm:well-posed} the unique solution $u$ is, among other things, in $C^1_T C^1(\cl{\Om})$.  Consequently $u$ must coincide with the solution to the ansatz  
\eqref{ansatz}.

In turn, \eqref{ode-ansatz} is a dissipative $2\times 2$ system of ODEs and can be approximated by a  Runge-Kutta scheme. To get some quantitative estimate on the absolute error of solutions found in Section \ref{particulars}, at least for initial data of the form \eqref{ic-eigenfunctions}, one can consider a piecewise linear interpolation of \eqref{ode-ansatz} and then calculate the energy-norm difference from the finite-element solution.

\subsection{Energy plots}

The accompanying figures and data demonstrate some of the numerical results. The initial data is considered of the form
\[
 u_0(x) = \frac{2}{\pi k}  \sin (k\pi x),\quad u_1\equiv 0
\]
which permits to compare the finite element solutions  to the pointwise Runge-Kutta solutions described in Section \ref{sec:pointwise-rk}.

Figure \ref{fig:1} shows the  point-value of displacement $u(t,0.5)$ next to the displacement value at the same $x=0.5$ for the  corresponding initial boundary value problem \emph{with linear damping}.

\def\captionText{Time interval:  $[0,T=10]$. Obtained by approximating the Ritz-Galerkin semi-discrete solution using numerical integration in the variation of parameter formula. Element size: $h=10^{-2}$; time-step: $\delta=2\cdot 10^{-3}$.}

\def\captionTextExt{Time interval:  $[0,T=50]$. On the interval $[0,10]$ obtained by approximating the Ritz-Galerkin semi-discrete solution using numerical integration in the variation of parameter formula, with element size $h=10^{-2}$ and time-step $\delta=2\cdot 10^{-3}$. Extended to  $t\in [10,50]$ using $5$-step Adams-Bashforth method.
}

Figure \ref{fig:2} presents  numerical estimates of the energy for solutions obtained by Ritz-Galerkin finite element scheme and successive approximations. The graphs indicate that  the energy decay deteriorates as the frequency of the initial data goes up while the initial finite-energy remains fixed ($E_{u^{(k)}}(0)=1$ independently of $k$), thus illustrating the lack of uniform which was  rigorously confirmed by Theorem \ref{thm:unstable}. 
The initial data are of the form \eqref{ic-eigenfunctions} with zero initial velocity. The indicated errors are obtained by comparing  each finite-element solution to a piecewise-linear interpolant of the corresponding piecewise RK solution \eqref{ansatz}.

Figure \ref{fig:3} uses multi-step extensions of the same solutions shown in Figure \ref{fig:2} to a larger time-scale using (5-step) Adams-Bashforth method. It also includes the decay of the $\ltom$-norm $|u^{(k)}(t)|_{_0}$ for these solutions.

\begin{figure}
\centering
\includegraphics[width=.8\textwidth, keepaspectratio=true, trim=1.5in 0in 1.5in 0in, clip=true]{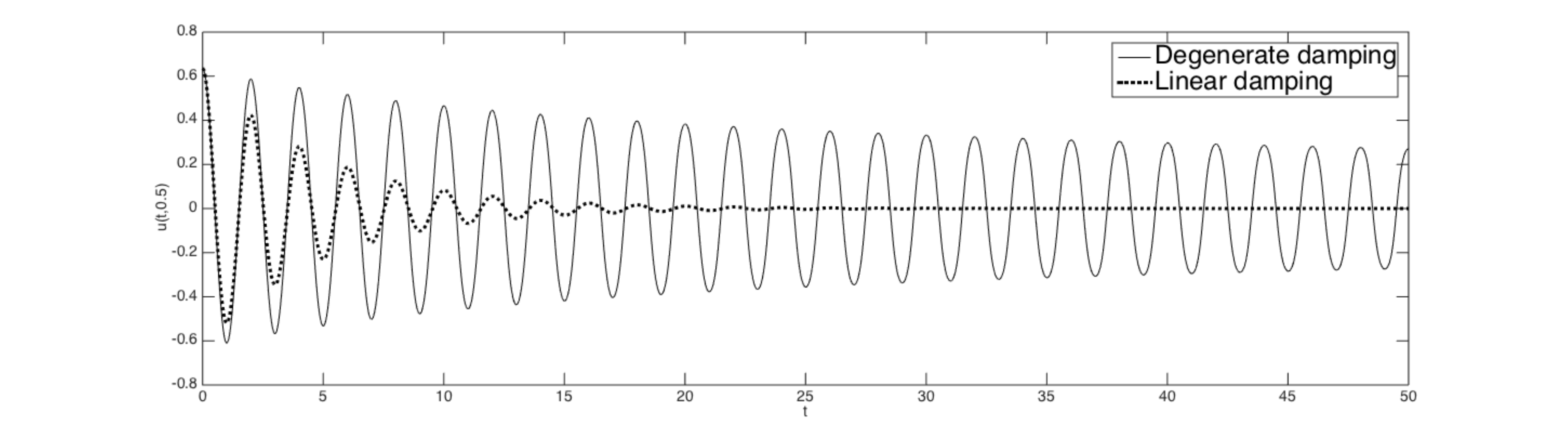}
\caption{The displacement value $u(t,x=0.5)$ of a numerical solution to problem \system{}  with $f(s)=s^2$. Initial displacement $u_0(x) = 2\pi^{-1}\sin(\pi x)$, initial velocity $u_1(x)=0$. 
\captionText{}
 Displayed next to the exact analytic solution of the corresponding initial boundary value problem with linear damping: $\ddot{u}  - u_{xx}+ (2/\pi)^2 \dot{u} = 0$.
}\label{fig:1}
\end{figure}

\begin{figure}
\centering
\includegraphics[width=.8\textwidth, keepaspectratio=true, trim=1.5in 0in 1.5in 0in, clip=true]{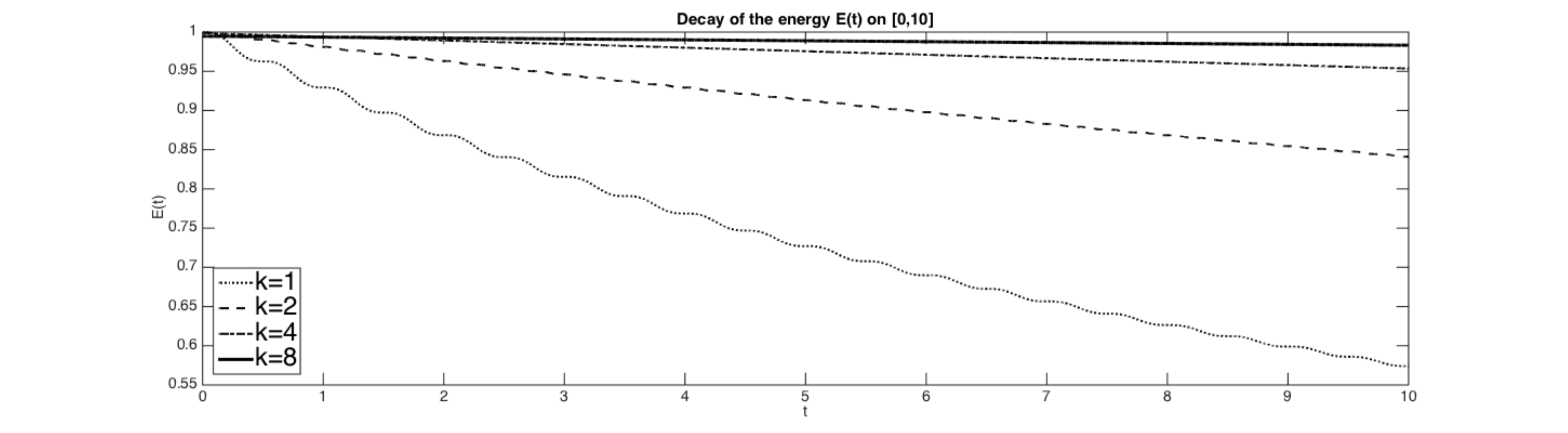}
\caption{Plots of energy $E_{u^{(k)}}(t)$ versus time $t$ for system \system{} with $f(s)=s^2$. \captionText{}
Initial displacements:   $u^{(k)}_0 = 2 (k\pi)^{-1} \sin(k \pi x)$ for frequencies $k=1,2,4,8$. Initial velocities are zero. 
Approximate error was obtained by computing maximal, over $[0,T]$, energy ($\scrH^0$-norm) difference from a piecewise-linear interpolation of the  pointwise RK solution given by ansatz \eqref{ansatz}. The initial energy of each solution is $1$. Corresponding errors $e_k$ are:
$e_1=3.84$E-$02$,
$e_2=5.20$E-$03$,
$e_4=6.97$E-$03$,
$e_8=4.15$E-$01$. 
}
\label{fig:2}
\end{figure}

\begin{figure}
\centering
\includegraphics[width=.8\textwidth, keepaspectratio=true, trim=1.5in 0in 1.5in 0in, clip=true]{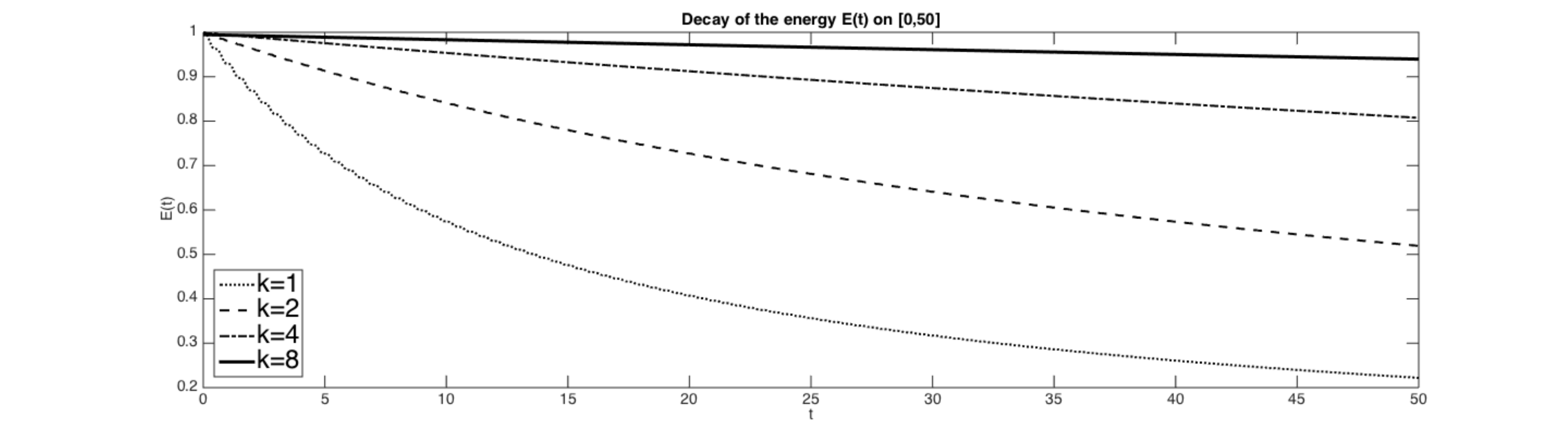}\\
\includegraphics[width=.8\textwidth, keepaspectratio=true, trim=1.5in 0in 1.5in 0in, clip=true]{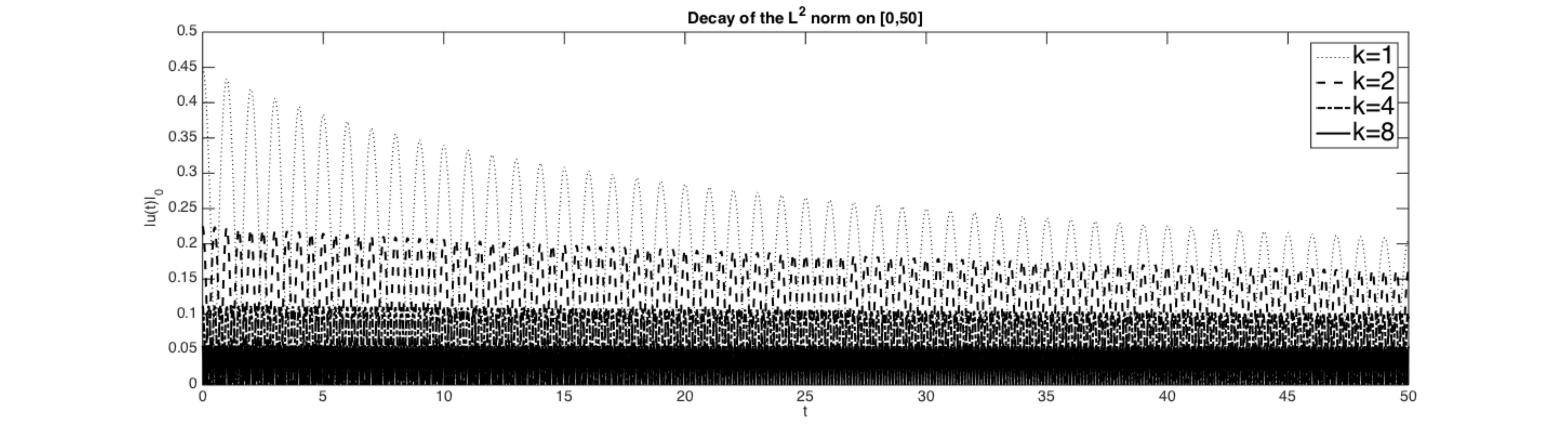}
\caption{Plots of energy $E_{u^{(k)}}(t)$ and $L^2(\Om)$ norm $|u^{(k)}(t)|_{_0}$   vs time $t$  for a numerical solution of  problem \system{}  with $f(s)=s^2$. This is an extension of the solutions from Figure \ref{fig:2} (originally defined for $t\in[0,10]$) to the time-interval $[0,50]$ by $5$-step Adams-Bashforth method. Due to rapid oscillations, the $\ltom$ norm values for the solution corresponding to $k=8$ appear to fill out a solid region.
}\label{fig:3}
\end{figure}

\section*{Acknowledgments}
The research of George Avalos and Daniel Toundykov was partially supported by the National Science Foundation grant DMS-1211232.
The numerical analysis presented in this work and partial theoretical results were obtained during a  Research Experience for Undergraduates (REU) program on applied partial differential equations in summer 2013 at the University of Nebraska-Lincoln. This REU site was funded by the National Science Foundation under grant NSF DMS-1263132. The REU project would not have been possible without assistance of Tom Clark.

\appendix 
\section{Finite-dimensional counterpart}
To complement the analysis of the infinite-dimensional model \system{} it is also interesting to examine the related finite-dimensional version of a degenerately damped harmonic oscillator:
\begin{equation}\label{ode}
  \ddot{x} + k x  + f(x) \dot{x} = 0,\quad (k>0)
\end{equation}
\begin{equation}\label{ode:ic}
  x(0)  = x_0,\quad \dot{x}(0)=x_1\,
\end{equation}
for $f = \al s^{2m}$, $m\in \bbN$. Rewrite it  as a first order evolution problem
\begin{equation}\label{ode-y}
 \dot{y} = G(y) \txtfor  G(y)\dfn \vecn{ y_2  \\  -f(y_1) y_2 - k y_1},\quad y(0) = \vecn{x_0\\x_1}\,.
\end{equation}
Henceforth, let $|\cdot|$ denote an equivalent norm on $\bbR^2$:
 \begin{equation}\label{equiv-norm}
  |v|^2 \dfn \texthalf k |v_1|^2 + \texthalf |v_2|^2\,.
 \end{equation}
Because $f$ is smooth and  non-negative, then  classical ODE results guarantee that   solutions are unique, exist globally and satisfy
\[
| y(t)|\leq |y(0)| \txtforall t\geq 0\,.
\]
Below we present stability results which contrast their  infinite-dimensional analogues discussed earlier. In particular, the finite-dimensional system is uniformly stable, while for distributed-parameter version the strong stability is open while uniform stability has been proven false.

\begin{lemma}\label{lem:lasalle}
The dynamical system corresponding to the ODE \odesystem{} is asymptotically (``strongly") stable.
\end{lemma}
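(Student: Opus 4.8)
The plan is to run LaSalle's invariance principle with the natural oscillator energy as a strict Lyapunov function. First I would take $V(y) \dfn |y|^2 = \texthalf k|y_1|^2 + \texthalf|y_2|^2$ as in \eqref{equiv-norm} and differentiate it along trajectories of \eqref{ode-y}. Using $\dot y_1 = y_2$ and $\dot y_2 = -f(y_1)y_2 - ky_1$, the cross terms $k y_1 y_2$ cancel and one is left with
\[
 \Dt V(y(t)) = -f(y_1(t))\,y_2(t)^2 = -\al\, y_1(t)^{2m}\, y_2(t)^2 \leq 0,
\]
because $f(s)=\al s^{2m}\geq 0$. This immediately yields Lyapunov stability of the origin together with the a priori bound $|y(t)|\leq|y(0)|$ already recorded in the text. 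In particular every orbit is bounded, hence (in $\bbR^2$) has precompact closure, which is exactly the standing hypothesis required to apply LaSalle.

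Next I would identify the set on which the dissipation vanishes,
\[
 E \dfn \{\, y\in\bbR^2 : f(y_1)y_2^2 = 0 \,\} = \{\,y_1=0\,\}\cup\{\,y_2=0\,\},
\]
and determine the largest invariant subset $M\subseteq E$. If $y_0\in M$, its entire orbit stays in $E$, so $\Dt V\equiv 0$ along it, i.e. $y_1(t)\,y_2(t)\equiv 0$. Suppose $y_1\not\equiv 0$; then $y_1(t_0)\neq 0$ for some $t_0$, and by continuity $y_1\neq 0$ on a neighborhood $I$ of $t_0$, which forces $y_2\equiv 0$ on $I$. But then $0=\dot y_2 = -f(y_1)y_2 - ky_1 = -ky_1$ on $I$, and since $k>0$ this gives $y_1\equiv 0$ on $I$, a contradiction. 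Hence $y_1\equiv 0$, whence $y_2=\dot y_1\equiv 0$ as well, so $y_0=0$ and $M=\{0\}$.

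Finally, LaSalle's invariance principle asserts that the $\om$-limit set of every bounded trajectory is contained in $M$, so $y(t)\to 0$ as $t\to\infty$. Combined with the Lyapunov stability from the first step, this is precisely asymptotic (strong) stability of the equilibrium, proving the lemma. I do not anticipate a genuine obstacle: the only points needing care are the precompactness of orbits, which is immediate from the energy bound in finite dimensions, and the computation of $M$. The degeneracy of the damping at $y_1=0$ is harmless here, because the set $\{y_1=0\}$ supports no nontrivial invariant motion of the conservative part --- in sharp contrast to the infinite-dimensional setting where, as Theorem \ref{thm:unstable} shows, the analogous argument fails to produce uniform decay.
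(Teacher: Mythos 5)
Your proposal is correct and follows essentially the same route as the paper: LaSalle's invariance principle with the Lyapunov function $V(y)=|y|^2$ from \eqref{equiv-norm}, the same identification of the set where the dissipation vanishes, and the same continuity-plus-ODE contradiction showing the only invariant trajectory in that set is the origin. Your version is marginally more explicit about orbit precompactness and the Lyapunov-stability half of asymptotic stability, but the argument is the paper's argument.
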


\begin{proof}
This is a direct consequence of LaSalle's invariance principle with the Lyapunov function given by the equivalent norm \eqref{equiv-norm}: $V(y)= |y|^2$. We only need to check what kinds of trajectories reside in the invariant set of the system:
\[
E =\{ y\in \bbR^2  :  \nabla V(y) \cdot G(y) = 0 \}\,.
\]
So for $y\in E$
\[
k y_1 y_2  - f(y_1) y_2^2 - ky_1 y_2 = 0\,.
\]
\[
f(y_1)y_2^2 = 0
\]
In particular, either $y_1=0$ or $y_2 =0$.

Now suppose that a solution trajectory $\{(t, y=(x(t),\dot{x}(t))) : t\geq 0\}$ resides in $E$. If at some $t$ we have $x(t)\neq 0$, then by the continuity in time, it is nonzero on some interval $I$. On that interval we must have $\dot{x}\equiv 0$ by the property of $E$. But then on that interval, from equation \eqref{ode} we get a contradiction since the solution has to be constant and therefore zero.

Thus the only trajectory in $E$ is the trivial one.
By LaSalle's invariance principle every solution  is asymptotically  stable.
\end{proof}

\begin{theorem}
The dynamical system corresponding to the ODE \odesystem{} is uniformly stable.
\end{theorem}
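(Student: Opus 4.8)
The plan is to read uniform stability as uniform asymptotic stability, that is, as a decay that is uniform over bounded sets of data: for every $R>0$ and every $\eps>0$ there should exist a time $T^\ast=T^\ast(R,\eps)$, independent of the particular initial state, such that $|y(0)|\leq R$ forces $|y(t)|\leq\eps$ for all $t\geq T^\ast$. Two ingredients are already at hand. First, differentiating \eqref{equiv-norm} along \eqref{ode-y} gives $\frac{d}{dt}|y|^2=\nabla V\cdot G=-f(y_1)y_2^2\leq 0$, so the norm is nonincreasing and $|y(t)|\leq|y(0)|$; in particular the origin is Lyapunov stable, and the system being autonomous, uniformly so. Second, because $V(y)=|y|^2$ is radially unbounded and Lemma \ref{lem:lasalle} shows that the only trajectory contained in $\{\nabla V\cdot G=0\}$ is the trivial one, LaSalle's invariance principle yields \emph{global} pointwise attraction: every trajectory converges to $0$.

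The crux is to upgrade this pointwise attraction to attraction that is uniform on each closed ball $\bar B_R=\{|y|\leq R\}$. Fix $\eps>0$ and, for each datum $p\in\bar B_R$, let $y(\cdot;p)$ denote the trajectory issued from $p$. Pointwise attraction provides a time $T_p$ with $|y(T_p;p)|<\eps/2$. Continuous dependence of solutions on initial data—valid here since $G$ is smooth, hence locally Lipschitz—yields an open neighborhood $U_p$ of $p$ on which $|y(T_p;q)|<\eps$ for every $q\in U_p$. The family $\{U_p\}_{p\in\bar B_R}$ covers $\bar B_R$; extracting a finite subcover $U_{p_1},\dots,U_{p_N}$ and setting $T^\ast=\max_i T_{p_i}$, I would argue as follows: given any $q\in\bar B_R$, choose $i$ with $q\in U_{p_i}$, so that $|y(T_{p_i};q)|<\eps$, and then invoke the monotonicity $|y(t)|\leq|y(s)|$ for $t\geq s$ to conclude $|y(t;q)|<\eps$ for all $t\geq T_{p_i}$, in particular for all $t\geq T^\ast$. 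This is exactly uniform attraction on $\bar B_R$, which together with the monotone bound $|y(t)|\leq|y(0)|$ establishes uniform asymptotic stability of the origin.

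The one genuine obstacle is the extraction of a finite subcover, which relies on the compactness of $\bar B_R$ in $\bbR^2$. This is precisely the ingredient with no infinite-dimensional counterpart: the high-frequency family \eqref{initial-data-frequency} used in Section \ref{sec:nonuniform} lives in a bounded but non-compact subset of $\scrH^0$, so no finite subcover exists and pointwise attraction cannot be made uniform—consistent with Theorem \ref{thm:unstable}. I would therefore present the proof so as to spotlight this use of Heine--Borel. As an alternative yielding an explicit rate, one could instead run a multiplier/observability argument: combining the energy identity $V(T)-V(S)=-\int_S^T f(x)\dot x^2$ with the equipartition identity obtained by multiplying \eqref{ode} by $x$, one derives over one oscillation period $T_0=2\pi/\sqrt{k}$ a dissipation lower bound of the form $\int_S^{S+T_0} f(x)\dot x^2\,dt\gtrsim_R V(S+T_0)^{m+1}$, leading to a recursion $V_{n+1}\leq V_n-c_R V_{n+1}^{m+1}$ and the algebraic rate $V(t)\sim t^{-1/m}$. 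That route, however, requires the more delicate verification that a trajectory cannot linger near the coordinate axes where the damping degenerates, so the compactness argument is the cleaner path to the stated qualitative result.
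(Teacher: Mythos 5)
Your proof is correct and takes essentially the same approach as the paper: both rest on pointwise attraction from Lemma \ref{lem:lasalle}, continuous dependence on initial data over finite time horizons, monotonicity of $t\mapsto|y(t)|$, and compactness of bounded sets in $\bbR^2$. The only difference is packaging---you run a direct open-cover (Heine--Borel) argument, whereas the paper argues by contradiction via a convergent subsequence of initial data (Bolzano--Weierstrass)---a logically equivalent use of the same compactness, which, as you note, is exactly the ingredient that fails in $\scrH^0$.
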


\begin{proof} 
Proceed by contradiction. Assume for a bounded set $B\subset \bbR^2$ there exists some $\eps>0$,  a bounded sequence of initial data $(y_{0n})_n\subset B$, and a sequence of corresponding times $(T_n)_n$ with $T_n\nearrow \infty$ such that
\[
|y_n(T_n)|>\eps\,.
\]
Extract a convergent subsequence of initial data, reindexed again by $n$. Let $z_0 \in \cl{B}$ denote this limit point and $t\mapsto z(t)$ be the corresponding solution. By Lemma \ref{lem:lasalle}  
\[
\lim_{t\to\infty}z(t)=\bfzer\,.
\]
In particular, there exists $T$  such that  for  $t\geq T$ we have  $|z(t)|< \eps/2$.  Because the non-linearity $(x_1,x_2)\mapsto f(x_1)x_2$ is locally Lipschitz on $\bbR^2$, and the system \eqref{ode-y} is non-accretive, then there exists $\del=\del(T, \cl{B})>0$  such that for any $\eta_0 \in B$ if $|\eta-z_0|<\del$, the corresponding solutions satisfy
\[
|\eta(t)-z(t)|<\frac{\eps}{2} \txtfor t\in[0,T]\,.
\]
Since $T_n \nearrow \infty$, we can find $T_N > T$. Next, because $y_{0n}$ converge to $z_0$, then  for $n> N$ we can find find $y_{0n}$ so that   $|y_{0n}-z_0|<\del$  and, consequently,
\[
 |y_n(t)-z(t)| < \frac{\eps}{2}\txtfor t\in [0,T] \,.
\]
Then $|y_n(T)| < \eps/2  + |z(T)| < \eps$. Because the system is non-accretive, then  $|y_n(t)| < \eps$ for all $t\geq T$, and in particular it holds for $T_n \geq T_N > T$. So $|y_n(T_n)| <\eps$ which contradicts the choice of $y_{0n}$.
\end{proof}


\end{document}